\newtheorem{theorem}{Theorem}
\newtheorem{lemma}[theorem]{Lemma}
\newtheorem{corollary}[theorem]{Corollary}
\newtheorem{remark}[theorem]{Remark}
\numberwithin{equation}{section}
\numberwithin{theorem}{section}
\numberwithin{table}{section}
\numberwithin{figure}{section}
\def\cG{{\mathcal G}}
\def\cI{{\mathcal I}}
\def\cJ{{\mathcal J}}
\def\cP{{\mathcal P}}
\def\cQ{{\mathcal Q}}
\def\cS{{\mathcal S}}
\def\({\left(}
\def\){\right)}
\def\rf#1{\left\lceil#1\right\rceil}
\def\mand{\qquad \mbox{and} \qquad}
\def \F{{\mathbb F}}
\begin{document}

\title{Functional graphs of families of quadratic polynomials}

\author[B. Mans]{Bernard Mans}
\address{School of Computing, Macquarie University, Sydney, NSW 2109, Australia}
\email{bernard.mans@mq.edu.au}

\author[M. Sha]{Min Sha}
\address{School of Mathematical Sciences, South China Normal University, Guangzhou, 510631, China}
\email{min.sha@m.scnu.edu.cn}

\author[I. E. Shparlinski]{Igor E. Shparlinski}
\address{School of Mathematics and Statistics, University of New South Wales, Sydney, NSW 2052, Australia}
\email{igor.shparlinski@unsw.edu.au}

\author[D. Sutantyo]{Daniel Sutantyo}
\address{School of Computing, Macquarie University, Sydney, NSW 2109, Australia}
\email{daniel.sutantyo@gmail.com}

\begin{abstract} 
We study functional graphs generated by several 
quadratic polynomials, acting simultaneously on a finite field of odd characteristic.  
We obtain several results about the number of leaves in such graphs. In particular, in 
the case of graphs generated by three polynomials, we relate the distribution 
of leaves to the Sato-Tate distribution of Frobenius traces of 
elliptic curves. We also present extensive numerical results which we hope may 
shed some light on the distribution of leaves for larger families of polynomials. 
\end{abstract}

\keywords{Finite field, functional graph, graph leaves, quadratic polynomial, elliptic curve}

\subjclass[2010]{11T06, 11T24, 11G20, 05C25, 05C69}

\maketitle

\section{Introduction}
\label{sec:intro}   

Let $\F_q$ be the finite field of $q$ elements, where $q$ is a power of some odd prime $p$. 
Recall that the functional graph of a polynomial $f \in \F_q[X]$ is the directed graph $\cG(f)$ with nodes labelled by 
the elements of $\F_q$ and with an edge from $u$ to $v$,   $u,v\in \F_q$ if and only if $f(u) = v$. 
In recent years, functional graphs over finite fields and their associated dynamical systems have been extensively studied, 
see~\cite{BaBr,  KLMMSS,MSSS1,MSSS2, SoKr, VaSha} and~\cite{BGHKST, BrGar1, BrGar2, BuSch, ChouShp, FlGar, Gart1, Gart2, H-B, Juul, JKMT, MaTh, OstSha, PomShp, Ren}
respectively.  

The case of quadratic polynomials 
has received a special attention, see~\cite{H-B, MSSS1, VaSha}. 
It is easy to see that 
the functional graph of a quadratic polynomial $aX^2 + bX + c$ is isomorphic to the functional graph of some polynomial $X^2 + d$ 
(see, for example, the proof of~\cite[Theorem~2.1]{KLMMSS}). 
Hence, up to isomorphism, it is enough to study functional graphs of polynomials of the form $f(X) = X^2 + a$. 
In particular, we see that  $(q-1)/2$ vertices $v \in \F_q$ in the graph $\cG(X^2 + a)$ for which $v-a \not \in \cQ_q$ are leaves (that is, of in-degree $0$). 
Here, $\cQ_q = \{u^2:~u\in \F_q\}$ is the set of $(q+1)/2$  squares in $\F_q$. 

We now generalise the construction of functional graphs to several polynomials. Namely, given $n$ polynomials 
$f_1, \ldots, f_n \in \F_q[X]$ we define the   directed graph $\cG(f_1, \ldots, f_n)$ with nodes labelled by 
elements of $\F_q$ and with an edge from $u$ to $v$, $u,v\in \F_q$, if and only if $ f_i(u) = v$ for some $i =1, \ldots, n$. 

In this paper, we continue studying the case of quadratic polynomials. 
For simplicity, we study the graphs $\cG(X^2 + a_1, \ldots,  X^2 + a_n)$ with pairwise distinct $a_1 , \ldots, a_n \in \F_q$. 
In particular, we are interested in the leaves of such graphs. 
Note that usually a directed graph having less leaves is more likely to have a large cycle. 

We remark that for a fixed $n$, the above graphs considered in this paper might not cover all the graphs generated by arbitrary $n$ 
pairwise distinct quadratic polynomials. 

Some of our motivations also come from considering covering $\F_q$ by shifted copies 
$\cQ_q+a = \{u+a:~u \in \cQ_q\}$ of the set of squares. 
Recall that the {\it Paley graph\/} of $\F_q$, denoted by $\cP_q$, is defined to be the graph with nodes labelled by   
the elements of $\F_q$ and with an edge from $u$ to $v$,   $u,v\in \F_q$ if and only if $v - u$ is a non-zero square. 
It is easy to see that the domination number of the Paley graph $\cP_q$ is exactly the minimal number of  
shifted copies $\cQ_q+a$ whose union is equal to $\F_q$. 
When $q=p$ is prime and $q \equiv 3 \pmod{4}$, this number is between 
$$
\Bigl(\frac{1}{2}-\varepsilon\Bigr)\frac{\log q}{\log 2} \mand  \Big\lfloor \frac{\log (q+1)}{\log 2} \Big\rfloor
$$ 
for any fixed $\varepsilon > 0$ and sufficiently large $q$; see~\cite[Theorems~1 and~3]{Lee}.   
Note that in~\cite{Lee} the definition of the Paley graph is different from the above, there it has an edge from $u$ to $v$ 
if and only if $u-v$ is a non-zero square, but this does not change the estimate we just mentioned.  We also mention the recent work~\cite{MSW} where a related but 
independent question has been considered. 

We remark that throughout the paper,
the implied constants in the `$O$' symbols are absolute. 

We state the main theoretical results in Section~\ref{sec:main} 
and prove them one by one in Section~\ref{sec:proofs} after collecting some preliminary results in Section~\ref{sec:preliminaries}. 
We present the numerical results in Section~\ref{sec:numerical}.

\section{Main results}   
\label{sec:main}

\subsection{Graphs with or without leaves}  

 First we show that if $n$ is not very large, then any graph $\cG(X^2 + a_1, \ldots, X^2 + a_n)$ 
 with pairwise distinct $a_1, \ldots, a_n \in \F_q$ has a leaf.

\begin{theorem}
\label{thm:Leaves}  
For any positive integer $n$ satisfying
$$
n^2 2^{4n-4} < q
$$ 
and arbitrary pairwise distinct elements $a_1, \ldots, a_n \in \F_q$, 
 the  directed graph $\cG(X^2 + a_1, \ldots, X^2 + a_n)$   has a  leaf. 
\end{theorem}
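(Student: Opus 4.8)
The plan is to count the leaves directly and prove the count is positive. A vertex $v \in \F_q$ has in-degree zero exactly when none of the equations $u^2 + a_i = v$ is solvable, i.e.\ when $v - a_i$ is a non-square for every $i = 1, \ldots, n$; since $0 \in \cQ_q$, such a $v$ automatically avoids $a_1, \ldots, a_n$. Writing $\chi$ for the quadratic character of $\F_q$ with $\chi(0) = 0$, the factor $(1 - \chi(v-a_i))/2$ is the indicator that $v - a_i$ is a non-square, so the number $N$ of leaves equals
$$
N = \sum_{\substack{v \in \F_q \\ v \neq a_1, \ldots, a_n}} \prod_{i=1}^n \frac{1 - \chi(v - a_i)}{2}.
$$

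First I would expand the product over subsets $S \subseteq \{1, \ldots, n\}$, separating a main term from character-sum error terms:
$$
N = \frac{1}{2^n} \sum_{S \subseteq \{1,\ldots,n\}} (-1)^{|S|} \sum_{v \neq a_1, \ldots, a_n} \chi\Bigl(\prod_{i \in S}(v - a_i)\Bigr).
$$
The choice $S = \emptyset$ gives the main term $(q - n)/2^n$. For each nonempty $S$, the polynomial $\prod_{i \in S}(v - a_i)$ is squarefree of degree $|S| \geq 1$ because $a_1, \ldots, a_n$ are pairwise distinct, so the Weil bound for multiplicative character sums (a standard estimate I expect among the preliminary results) yields
$$
\Bigl| \sum_{v \in \F_q} \chi\Bigl(\prod_{i \in S}(v - a_i)\Bigr) \Bigr| \leq (|S| - 1)\sqrt{q}.
$$
Deleting the at most $n$ values $v = a_j$ alters each sum by at most $n$, so the contribution of each $S$ is bounded by $(|S|-1)\sqrt q + n$.

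Next I would sum these bounds over all nonempty $S$. Using the identity $\sum_{k=1}^{n} \binom{n}{k}(k-1) = n2^{n-1} - 2^n + 1$, the total error is bounded by roughly $\tfrac12 n\sqrt q$ after dividing by $2^n$, giving
$$
N \geq \frac{q - n}{2^n} - \Bigl(\tfrac12 n + o(1)\Bigr)\sqrt{q}.
$$
Comparing the two terms, $N$ becomes positive as soon as $\sqrt{q} > n\,2^{n-1}$, that is $q > n^2 2^{2n-2}$, which is implied by the hypothesis $n^2 2^{4n-4} < q$ since $4n - 4 \geq 2n - 2$ for every positive integer $n$. Thus a leaf exists.

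The main obstacle I anticipate is bookkeeping rather than anything conceptual: one must track carefully how the boundary values $v = a_j$ perturb each individual character sum, and then control the weighted binomial sums $\sum_k \binom{n}{k}(k-1)$ and $\sum_k \binom{n}{k}(n-k)$ so that the final inequality is clean enough to compare against the stated threshold. Once these are handled, the whole argument rests on a single application of the Weil bound together with inclusion--exclusion.
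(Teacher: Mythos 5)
Your proposal is correct, but it takes a genuinely different route from the paper's proof of this theorem. The paper proves Theorem~\ref{thm:Leaves} by estimating $\#\bigl(\bigcup_{i=1}^n(\cQ_q+a_i)\bigr)$ via inclusion--exclusion over the intersections $\bigcap_{i\in\cJ}(\cQ_q+a_i)$, each controlled by Lemma~\ref{lem:ltersect Qa} (whose own proof needs an extra device, the Paley-graph clique bound, to handle the diagonal terms $u=a_i$); a leaf exists when the union has size $<q$. You instead count leaves directly as $\sum_v\prod_i(1-\chi(v-a_i))/2$, which is precisely the decomposition the paper uses later to prove Theorem~\ref{thm:Count n}, and then you read off positivity. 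Your bookkeeping is sound: the Weil bound applies since each $\prod_{i\in S}(X-a_i)$ is squarefree of degree $\#S\ge 1$, and the weighted binomial identities give an error of at most $\bigl(\tfrac n2-1+2^{-n}\bigr)q^{1/2}+O(n)$ against a main term $q/2^n$. The interesting difference is quantitative: in the union decomposition the character-sum errors are not damped by the factor $2^{-n}$, which is why the paper lands on the threshold $n^2 2^{4n-4}<q$; in your direct count they are, and your condition $q>n^2 2^{2n-2}$ actually recovers the sharper threshold of Theorem~\ref{thm:Leaves2} \emph{without} the restriction $q\equiv 3\pmod 4$ (the paper obtains that case by a separate Graham--Spencer/Sch\"utte-property argument). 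In short, your argument proves a stronger statement than the one asked, and under the stated hypothesis $n^2 2^{4n-4}<q$ the required inequality holds with ample room, so the informal $o(1)$ in your error term causes no difficulty.
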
 

When $n=2$ and $q \ge 64$, by Theorem~\ref{thm:Leaves}, for any $a_1 \ne a_2 \in \F_q$ the  directed graph $\cG(X^2 + a_1, X^2 + a_2)$ 
 has  a leaf. 

We remark that the condition for $n$ in Theorem~\ref{thm:Leaves} can be reformulated as 
$$
n \le \Bigl( \frac{1}{4} - \varepsilon\Bigr)\frac{\log q} {\log 2}
$$
for any fixed $\varepsilon > 0$ and sufficiently large $q$. 

When $q \equiv 3 \pmod{4}$, we obtain the following improvement upon Theorem~\ref{thm:Leaves}. 

\begin{theorem}
\label{thm:Leaves2}  
If $q \equiv 3 \pmod{4}$, then for any positive integer $n$ satisfying
$$
n^2 2^{2n-2} < q
$$ 
and arbitrary pairwise distinct $a_1, \ldots, a_n \in \F_q$, 
 the  directed graph $\cG(X^2 + a_1, \ldots, X^2 + a_n)$   has a  leaf. 
\end{theorem}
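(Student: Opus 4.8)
The plan is to estimate directly the number $N$ of leaves of $\cG(X^2+a_1,\ldots,X^2+a_n)$ and to show that the hypothesis forces $N>0$. Recall that $v\in\F_q$ is a leaf precisely when it has no preimage, that is, when $v-a_i\notin\cQ_q$ for every $i=1,\ldots,n$; equivalently, the shifted copies $\cQ_q+a_1,\ldots,\cQ_q+a_n$ fail to cover $\F_q$. Let $\chi$ denote the quadratic character of $\F_q$, with $\chi(0)=0$, so that for $x\ne 0$ the quantity $\tfrac12(1-\chi(x))$ is the indicator of $x$ being a nonsquare. First I would write, for $v\notin\{a_1,\ldots,a_n\}$,
$$
[v\text{ is a leaf}]=\prod_{i=1}^{n}\frac{1-\chi(v-a_i)}{2},
$$
and hence express $N$ as $\sum_{v\in\F_q}\prod_i \tfrac12(1-\chi(v-a_i))$ minus a correction coming from the at most $n$ values $v=a_j$, at which the product fails to be an indicator.

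Next I would expand the product over all subsets $S\subseteq\{1,\ldots,n\}$, producing a main term $q/2^n$ from $S=\varnothing$ together with the multiplicative character sums $\sum_{v\in\F_q}\chi\bigl(\prod_{i\in S}(v-a_i)\bigr)$ for $S\ne\varnothing$. Since the $a_i$ are pairwise distinct, each polynomial $\prod_{i\in S}(v-a_i)$ is squarefree of degree $|S|\ge 1$ and is therefore not a constant times a perfect square, so the Weil bound collected in Section~\ref{sec:preliminaries} gives $\bigl|\sum_v\chi(\cdots)\bigr|\le(|S|-1)\sqrt q$. Summing over all nonempty $S$ and dividing by $2^n$ bounds the total error by $2^{-n}\sqrt q\sum_{k=1}^{n}\binom{n}{k}(k-1)=2^{-n}\sqrt q\,(n2^{n-1}-2^n+1)$.

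The step that genuinely uses the hypothesis $q\equiv3\pmod 4$ is the control of the diagonal correction term $\sum_{j}\prod_i\tfrac12(1-\chi(a_j-a_i))$. Each summand equals $\tfrac12$ when $a_j-a_i$ is a nonsquare for every $i\ne j$, and vanishes otherwise. Because $q\equiv3\pmod 4$ forces $\chi(-1)=-1$, for any two indices $j_1\ne j_2$ exactly one of $a_{j_1}-a_{j_2}$ and $a_{j_2}-a_{j_1}$ is a nonsquare; hence at most one index $j$ can have all the differences $a_j-a_i$ nonsquare, and the correction is at most $\tfrac12$. (For general $q$ this term could be as large as $n/2$, which is why the clean constant appears in this case.)

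Finally I would combine the pieces into
$$
N\ \ge\ \frac{q-(n2^{n-1}-2^n+1)\sqrt q}{2^{n}}-\frac12,
$$
and check by a short computation that the hypothesis $n^22^{2n-2}<q$, equivalently $\sqrt q>n2^{n-1}$, makes the right-hand side positive (keeping the term $-2^n+1$ in the error coefficient supplies the slack needed to beat the $-\tfrac12$); as $N$ is a nonnegative integer this yields $N\ge1$, so a leaf exists. The main obstacle is not the Weil input, which I treat as a black box, but the bookkeeping that keeps the accumulated error below the main term $q/2^n$: the per-subset losses must be summed without wasting the factor $2^{-n}$, and the diagonal term must be pinned down to a constant, which is exactly the point where $q\equiv3\pmod 4$ is indispensable.
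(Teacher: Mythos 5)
Your proof is correct and is essentially the paper's argument written out in full: the paper simply invokes the Graham--Spencer character-sum computation from~\cite{GS} (with the Weil bound in place of Burgess) to produce a $u$ with $a_i-u\in\cQ_q$ for all $i$, and since $\chi(-1)=-1$ that set of $u$ coincides exactly with your set of leaves, so your subset expansion, your bound of the diagonal correction by $\tfrac12$ via the tournament property, and your final numerical check are precisely the ingredients of the cited argument. Nothing further is needed.
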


Next we show that there is a positive integer $n$ which is not very large and some 
 pairwise distinct $a_1, \ldots, a_n \in \F_q$  such that the graph $\cG(X^2 + a_1, \ldots, X^2 + a_n)$ 
has  no leaf.

\begin{theorem}
\label{thm:NoLeaves}   
There exist a positive integer $n$ satisfying
$$
n \le \left\lceil \frac{\log q}{\log 2} \right\rceil
$$ 
and some pairwise distinct $a_1, \ldots, a_n \in \F_q$  such that  
the   directed graph $\cG(X^2 + a_1, \ldots, X^2 + a_n)$   has no leaf. 
\end{theorem}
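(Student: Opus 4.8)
The plan is to translate the leaf condition into a covering problem and then dispatch it with a greedy argument. First I would observe that a vertex $v \in \F_q$ has in-degree zero in $\cG(X^2 + a_1, \ldots, X^2 + a_n)$ precisely when $v - a_i \notin \cQ_q$ for every $i = 1, \ldots, n$, since an incoming edge $u \to v$ exists if and only if $u^2 + a_i = v$, that is, $v - a_i \in \cQ_q$, for some $i$. Hence the graph has \emph{no} leaf if and only if the shifted copies $\cQ_q + a_i$ cover all of $\F_q$, i.e. $\bigcup_{i=1}^n (\cQ_q + a_i) = \F_q$. Because $0 \in \cQ_q$ we have $|\cQ_q| = (q+1)/2$, so each shifted copy occupies slightly more than half of $\F_q$; this is exactly what makes a logarithmic number of shifts plausible, and it ties the statement to the upper bound for the domination number of the Paley graph recalled in the introduction.

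Next I would build the shifts greedily. Suppose that after selecting some shifts the uncovered set $U \subseteq \F_q$ is nonempty. Averaging the covering count over all possible shifts gives
$$
\sum_{a \in \F_q} |(\cQ_q + a) \cap U| = \sum_{v \in U} |\cQ_q| = |U|\,\frac{q+1}{2},
$$
so the average over the $q$ choices of $a$ is $|U|(q+1)/(2q) > |U|/2$, and therefore there exists a shift $a$ with $|(\cQ_q + a) \cap U| > |U|/2$. Adding such a shift reduces the uncovered set to size at most $|U| - |U|(q+1)/(2q) = |U|(q-1)/(2q) < |U|/2$.

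Iterating this step from $U_0 = \F_q$, of size $q$, the uncovered set after $k$ steps satisfies $|U_k| \le q\,\bigl((q-1)/(2q)\bigr)^k < q/2^k$, where the final inequality is strict because $(q-1)/(2q) < 1/2$. Choosing $k = \rf{\log q/\log 2}$ forces $2^k \ge q$, hence $q/2^k \le 1$ and thus $|U_k| < 1$; since $|U_k|$ is a non-negative integer this yields $U_k = \emptyset$, i.e. the selected shifts cover $\F_q$. Finally, the shifts produced this way are automatically pairwise distinct: a shift $a$ already used has $\cQ_q + a$ contained in the covered set and so adds nothing to $U$, whereas each greedy choice strictly increases the covered set. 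Stopping as soon as $\F_q$ is covered therefore produces pairwise distinct $a_1, \ldots, a_n$ with $n \le \rf{\log q/\log 2}$ and no leaf in $\cG(X^2 + a_1, \ldots, X^2 + a_n)$, as required.

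The underlying mechanism is the standard greedy set-cover estimate, so I do not expect a genuine obstacle; the only points needing a little care are the bookkeeping that keeps the $a_i$ distinct, the strict inequality $(q-1)/(2q) < 1/2$ that converts $q/2^k \le 1$ into $|U_k| = 0$, and the reminder that $0 \in \cQ_q$ so that $|\cQ_q| = (q+1)/2$ rather than $(q-1)/2$.
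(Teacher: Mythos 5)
Your proposal is correct and follows essentially the same route as the paper: the paper's Lemma~\ref{lem:Intesect} is exactly your averaging step (there proved with the quadratic character rather than by directly counting that each $v$ lies in $(q+1)/2$ of the shifted copies), and the rest is the same greedy halving of the uncovered set with the same handling of distinctness and the same choice $k=\rf{\log q/\log 2}$.
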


We remark that alternatively we can phrase  the conclusion of Theorem~\ref{thm:NoLeaves} as the existence
of $a_1, \ldots, a_n \in \F_q$ with 
$$
\bigcup_{i=1}^n  \(\cQ_q+a_i\)  =\F_q. 
$$

\subsection{Distribution of leaves in graphs} 

Given $n$ pairwise  distinct elements $a_1, \ldots, a_n \in  \F_q$ we  denote by 
$L_q(a_1, \ldots, a_n)$ the number of leaves in the graph $\cG(X^2 + a_1, \ldots, X^2 + a_n)$.  
Here we present some estimates about $L_q(a_1, \ldots, a_n)$. 

Clearly, $L_q(a) = (q-1)/2$ for any $a \in \F_q$.  
We also obtain an explicit formula for $L_q(a,b)$ for any $a,b \in \F_q$, see Theorem~\ref{thm:Count 2} below. 

Let $\chi$ be the quadratic character of $\F_q$, that is, 
\begin{equation}
\label{eq:Def eta}
\chi(u) = \begin{cases} 1, & \text{if}\ u \in \cQ_q^{*},\\
0, & \text{if}\ u=0, \\
-1, & \text{if}\ u \not \in \cQ_q, 
\end{cases} 
\end{equation}
where $\cQ_q^*= \cQ_q\setminus \{0\}$ is the set of non-zero squares
(that is, of quadratic residues).

\begin{theorem}
\label{thm:Count 2}  
Write  $q = 4k + 1$ or $4k + 3$ for some integer $k$. For any two distinct elements $a,b \in \F_q$, we have 
$$
L_q(a,b) =  k+  
\begin{cases}     
\(\chi(a-b) - 1 \)/2, &   \text{ if } q \equiv 1 \pmod 4,\\
0,  &    \text{ if } q \equiv 3 \pmod 4.
\end{cases} 
$$
 \end{theorem}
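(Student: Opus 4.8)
The plan is to count leaves directly via the quadratic character $\chi$. A vertex $v \in \F_q$ is a leaf precisely when it has no preimage under either map, i.e. when neither $v-a$ nor $v-b$ lies in $\cQ_q$. Since $0 \in \cQ_q$, this is equivalent to $\chi(v-a) = -1$ and $\chi(v-b) = -1$; in particular $v \neq a$ and $v \neq b$, so these two vertices are never leaves. Thus
$$
L_q(a,b) = \#\{v \in \F_q : \chi(v-a) = \chi(v-b) = -1\}.
$$

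First I would rewrite this count as a character sum. For $v \notin \{a,b\}$ the quantity $\bigl(1-\chi(v-a)\bigr)/2$ is the indicator that $v-a$ is a nonzero non-square, and similarly for $b$, so
$$
L_q(a,b) = \frac{1}{4}\sum_{\substack{v \in \F_q \\ v \neq a, b}} \bigl(1-\chi(v-a)\bigr)\bigl(1-\chi(v-b)\bigr).
$$
Expanding the product leaves four sums to evaluate.

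The three easy pieces are immediate: the constant term contributes $q-2$; and each linear piece follows from $\sum_{v \in \F_q}\chi(v-c) = 0$, after removing the two excluded points (at which one factor vanishes), giving $\sum_{v \neq a,b}\chi(v-a) = -\chi(b-a)$ and $\sum_{v \neq a,b}\chi(v-b) = -\chi(a-b)$. The one genuinely substantive ingredient is the cross term $\sum_{v \neq a,b}\chi\bigl((v-a)(v-b)\bigr)$. Since the two excluded points already make the argument zero, this equals $\sum_{v \in \F_q}\chi\bigl((v-a)(v-b)\bigr)$, the classical quadratic character sum of a monic quadratic with nonzero discriminant $(a-b)^2$, whose value is $-1$; here I would simply invoke the standard evaluation (equivalently, Weil's bound is exact in this case). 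This is the only step that is not pure bookkeeping, and it is entirely routine.

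Finally I would assemble the pieces. The two linear terms combine through $\chi(b-a) = \chi(-1)\chi(a-b)$ with $\chi(-1) = (-1)^{(q-1)/2}$: they add to $2\chi(a-b)$ when $q \equiv 1 \pmod 4$ and cancel when $q \equiv 3 \pmod 4$. Hence the bracketed sum equals $q - 3 + 2\chi(a-b)$ or $q-3$, respectively; dividing by $4$ and substituting $q = 4k+1$ or $q = 4k+3$ yields the stated formula. I expect no real obstacle beyond correctly discarding the two excluded vertices $v = a, b$ and tracking the sign $\chi(-1)$ that separates the two congruence cases.
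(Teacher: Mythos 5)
Your proposal is correct and follows essentially the same route as the paper's proof: the indicator $\frac{1}{4}\bigl(1-\chi(u-a)\bigr)\bigl(1-\chi(u-b)\bigr)$ summed over $u\ne a,b$, the evaluation of the linear sums as $-\chi(b-a)$ and $-\chi(a-b)$, the classical value $-1$ for the quadratic cross-term sum, and the final split via $\chi(-1)=(-1)^{(q-1)/2}$. Nothing is missing; the arithmetic assembling $\frac{1}{4}\bigl(q-3+(\chi(-1)+1)\chi(a-b)\bigr)$ into the two stated cases checks out.
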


 For $n\ge 3$ we do not  have a simple and explicit formula, but we can obtain an 
 asymptotic formula for $L_q(a_1, \ldots, a_n)$. 
 
 \begin{theorem}
\label{thm:Count n}  
For any integer $n \ge 1$ and any $n$  pairwise distinct elements   $a_1, \ldots, a_n \in  \F_q$,  we have 
$$
\left|  L_q(a_1, \ldots ,a_n) - 2^{-n}q \right| <  \frac{2}{3} n q^{1/2}, 
$$
and for $n < 2q^{1/2}$, the above factor $2/3$ can be replaced by $1/2$. 
 \end{theorem}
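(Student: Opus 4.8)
The plan is to express the number of leaves as a character sum and then apply the Weil bound. A vertex $v\in\F_q$ is a leaf precisely when it lies outside every image $\cQ_q+a_i$, i.e. when $v-a_i\notin\cQ_q$ for all $i$; since $0\in\cQ_q$, this is the same as $\chi(v-a_i)=-1$ for all $i$. Using the elementary identity $\mathbf{1}[\chi(u)=-1]=\tfrac12(1-\chi(u))$, which is valid for $u\ne 0$, I would write
\begin{equation*}
L_q(a_1,\ldots,a_n)=\sum_{v\in\F_q}\ \prod_{i=1}^n\frac{1-\chi(v-a_i)}{2}\ -\ D,
\end{equation*}
where $D$ is a correction coming from the $n$ values $v=a_j$, at which the factor $\tfrac12(1-\chi(0))=\tfrac12$ is spurious since such $v$ are never leaves. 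As each $a_j$ contributes at most $\tfrac12$ one has $0\le D\le n/2$.

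First I would expand the product over subsets $S\subseteq\{1,\ldots,n\}$, giving
\begin{equation*}
\sum_{v\in\F_q}\prod_{i=1}^n\frac{1-\chi(v-a_i)}{2}
=\frac{1}{2^n}\sum_{S}(-1)^{|S|}\sum_{v\in\F_q}\chi\!\Bigl(\prod_{i\in S}(v-a_i)\Bigr).
\end{equation*}
The empty set yields the main term $2^{-n}q$. For $|S|=1$ the inner sum $\sum_v\chi(v-a_i)$ vanishes identically. For $2\le|S|=s\le n$, the polynomial $\prod_{i\in S}(v-a_i)$ is squarefree of degree $s$ (the $a_i$ being distinct), hence not a constant times a square, so the standard Weil bound gives $\bigl|\sum_v\chi(\prod_{i\in S}(v-a_i))\bigr|\le (s-1)q^{1/2}$. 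Summing these estimates over all nonempty $S$ and using $\sum_{s=1}^n\binom{n}{s}(s-1)=n2^{n-1}-2^n+1$, the total contribution of the nonempty subsets is at most $\bigl(\tfrac{n-2}{2}+2^{-n}\bigr)q^{1/2}$.

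Combining this with the bound $0\le D\le n/2$ gives $|L_q(a_1,\ldots,a_n)-2^{-n}q|\le\bigl(\tfrac{n-2}{2}+2^{-n}\bigr)q^{1/2}+\tfrac n2$. The main obstacle is to convert this into the two clean constants in the statement, i.e. to keep the lower-order diagonal terms under control so that the final constant is sharp rather than merely $O(n q^{1/2})$. When $n<2q^{1/2}$ the inequality $\tfrac n2\le (1-2^{-n})q^{1/2}$ holds, which is exactly what is needed to absorb the diagonal term $\tfrac n2$ together with the deficit $(2^{-n}-1)q^{1/2}$ into $\tfrac12 n q^{1/2}$, yielding the factor $1/2$; this inequality becomes tight only near $n\approx 2q^{1/2}$, which is why that threshold appears. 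For general $n$ the same bookkeeping, together with the crude estimate $\tfrac{n}{2q^{1/2}}\le\tfrac n6$ valid for $q\ge 9$ (the finitely many cases $q\in\{3,5,7\}$ being checked directly), shows the expression stays below $\tfrac23 n q^{1/2}$.
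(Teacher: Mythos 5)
Your main estimate follows the paper's proof essentially step for step: the same indicator $\prod_{i=1}^n\tfrac{1-\chi(v-a_i)}{2}$, the same expansion over subsets, the same use of the Weil bound together with $\sum_{s=1}^n\binom{n}{s}(s-1)=n2^{n-1}-2^n+1$, and the identical intermediate bound $|L_q(a_1,\ldots,a_n)-2^{-n}q|\le\bigl(\tfrac{n}{2}-1+2^{-n}\bigr)q^{1/2}+\tfrac{n}{2}$ (your correction term $D$ is just the paper's bookkeeping of the points $v=a_j$ carried out in the opposite direction). The derivation of the constant $2/3$, via $q^{1/2}\ge 3$ for $q\ge 9$ and a finite check for $q\in\{3,5,7\}$, also matches the paper.

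The one genuine gap is the sentence asserting that $n<2q^{1/2}$ implies $\tfrac{n}{2}\le(1-2^{-n})q^{1/2}$. This does not follow from the hypothesis alone: $n<2q^{1/2}$ only gives $\tfrac{n}{2q^{1/2}}<1$, whereas you need $\tfrac{n}{2q^{1/2}}\le 1-2^{-n}$, and near the threshold the slack $1-\tfrac{n}{2q^{1/2}}$ is of order $1/q$ (since $n^2$ may be as large as $4q-3$), which is far smaller than $2^{-n}$ unless $n$ is at least of order $\log q$. The assertion is true, but establishing it is precisely the nontrivial part of this half of the theorem, and the paper devotes a case analysis to it: for $n^2<\tfrac{961}{256}q$ (and separately for $n=2,3,4$) there is enough room because $4(1-2^{-n})^2\ge\tfrac{961}{256}$ when $n\ge 5$; for $n\ge\tfrac{31}{16}q^{1/2}$ one instead observes that $\bigl(2^{3-n}-2^{2-2n}\bigr)q<2$ and that $n^2<4q$ forces $n^2\le 4q-2$ because $4q-1\equiv 3\pmod 4$ is never a square. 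Without an argument of this kind (or at least a strict version of your inequality, since the theorem claims a strict bound), the constant $1/2$ is not established; the rest of your proposal is correct.
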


 \subsection{Connection with elliptic curves}
 
For the case $n=3$, we indeed can give a formula for  $L_q(a, b, c)$ by relating it to the elliptic curve defined by $a, b, c$. 

For any pairwise distinct elements $a, b, c \in \F_q$, we define the elliptic curve over $\F_q$: 
\begin{equation}
\label{eq:E abc}
E_{a,b,c}: \quad Y^2 = (X-a)(X-b)(X-c), 
\end{equation}
 see~\cite{Silv} for a general background on elliptic curves.
Let $E_{a,b,c}(\F_q)$ be the set of $\F_q$-rational poins on $E_{a,b,c}$. 
Let 
\begin{equation}
\label{eq:trace}
t_{a,b,c} = q + 1 - \#E_{a,b,c}(\F_q) 
\end{equation}
be the \textit{Frobenius trace}  of $E_{a,b,c}$. 

Then, we obtain a formula for $L_q(a, b, c)$ in terms of $t_{a,b,c}$.  

 \begin{theorem}
\label{thm:Count 3}  
For any  pairwise distinct elements   $a,b,c \in  \F_q$,  we have 
\begin{align*}
&L_q(a,b,c)  -  \frac{q}{8}  \\
& \quad \ =  \frac{1}{8}\( t_{a,b,c} - 7 \)+ \frac{1}{8}\(1+\chi(-1)\) \(\chi(a-b) + \chi(a-c) + \chi(b-c) \) \\
& \qquad  \  \ -\frac{1}{8}\( \chi((a-b)(a-c)) + \chi((b-a)(b-c))  + \chi((c-a)(c-b))\). 
\end{align*}
 \end{theorem}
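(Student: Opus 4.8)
The plan is to write $L_q(a,b,c)$ as a single character sum over $\F_q$ and to evaluate it by reducing to standard sums of the quadratic character $\chi$ over linear, quadratic, and cubic polynomials; the cubic sum is exactly where the Frobenius trace $t_{a,b,c}$ enters, which is the whole point of the elliptic-curve connection. First I would observe that $v \in \F_q$ is a leaf of $\cG(X^2+a, X^2+b, X^2+c)$ if and only if none of $v-a$, $v-b$, $v-c$ lies in $\cQ_q$, that is, $\chi(v-a) = \chi(v-b) = \chi(v-c) = -1$. For every $v \notin \{a,b,c\}$ this condition is detected by
\[
\frac{1}{8}\bigl(1-\chi(v-a)\bigr)\bigl(1-\chi(v-b)\bigr)\bigl(1-\chi(v-c)\bigr),
\]
which equals $1$ on leaves and $0$ otherwise. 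At the three points $v \in \{a,b,c\}$ one factor is $1-\chi(0)=1$, so the product overcounts (these points are never leaves, each having an incoming edge from $0$); hence $L_q(a,b,c)$ equals the sum of the displayed expression over all $v \in \F_q$ minus its values at $v=a,b,c$.

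Next I would expand the product and swap the order of summation, so that eight times the full sum is a combination of $\sum_v 1 = q$, three linear sums $\sum_v \chi(v-a)$, three quadratic sums $\sum_v \chi((v-a)(v-b))$, and one cubic sum $\sum_v \chi((v-a)(v-b)(v-c))$. I would then insert the standard evaluations: each linear sum vanishes; for distinct $\alpha,\beta$ the quadratic sum $\sum_v \chi((v-\alpha)(v-\beta))$ equals $-1$, via the substitution $w=v-\alpha$ reducing it to $\sum_{t\neq 1}\chi(t) = -1$; and, from the affine point count $\#E_{a,b,c}(\F_q) = q+1+\sum_x \chi((x-a)(x-b)(x-c))$ together with the definition~\eqref{eq:trace} of $t_{a,b,c}$, the cubic sum equals $-t_{a,b,c}$. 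This identifies the main term (before removing the diagonal) as $q$, three contributions $-1$ from the quadratic sums, and $+t_{a,b,c}$ from the cubic sum.

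The main obstacle, and the source of every correction term, is the careful bookkeeping of the three diagonal values at $v \in \{a,b,c\}$. At $v=a$ the surviving factor is $\bigl(1-\chi(a-b)\bigr)\bigl(1-\chi(a-c)\bigr)$, and symmetrically at $v=b,c$; expanding these and using $\chi(b-a)=\chi(-1)\chi(a-b)$, $\chi(c-a)=\chi(-1)\chi(a-c)$, and $\chi(c-b)=\chi(-1)\chi(b-c)$ produces precisely the factor $1+\chi(-1)$ multiplying $\chi(a-b)+\chi(a-c)+\chi(b-c)$, together with the three mixed terms $\chi((a-b)(a-c))$, $\chi((b-a)(b-c))$, $\chi((c-a)(c-b))$. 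Assembling the main term, subtracting this diagonal, dividing by $8$, and comparing with $q/8$ then yields the stated identity. Because this diagonal step is exactly where an off-by-one in the additive constant or a stray $\chi(-1)$ is easy to introduce, I would finally verify the resulting constant and all signs against a small explicit instance, for example $q=7$ with $(a,b,c)=(0,1,2)$, where $L_q(a,b,c)=0$ can be checked directly.
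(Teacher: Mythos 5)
Your approach is essentially the paper's: detect leaves with $\frac18\prod(1-\chi(v-\cdot))$, expand, evaluate the linear and quadratic character sums in the standard way, and convert the cubic sum into the point count of $E_{a,b,c}$. The only structural difference is bookkeeping: you sum over all of $\F_q$ and subtract the three diagonal values, while the paper works with sums punctured at $\{a,b,c\}$ throughout and restores the boundary terms inside each individual sum; these are equivalent, and your diagonal expansion does produce exactly the $(1+\chi(-1))$ term and the three mixed terms. There is, however, one point where your execution and the printed statement part ways. You evaluate the cubic sum as $\sum_{v}\chi((v-a)(v-b)(v-c))=-t_{a,b,c}$ via $\#E_{a,b,c}(\F_q)=q+1+\sum_x\chi(\cdots)$, i.e.\ with the point at infinity included in $\#E_{a,b,c}(\F_q)$, as the definition of $t_{a,b,c}$ in~\eqref{eq:trace} and the Hasse bound~\eqref{eq:Hasse} require. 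Carried through, this yields the constant $\frac18(t_{a,b,c}-6)$ rather than the $\frac18(t_{a,b,c}-7)$ in the statement. The paper instead gets $1-t_{a,b,c}$ for its punctured cubic sum by asserting that $(\#E_{a,b,c}(\F_q)-3)/2$ values of $u$ satisfy $\chi((u-a)(u-b)(u-c))=1$; under the convention just described that count should be $(\#E_{a,b,c}(\F_q)-4)/2$, and the omitted point at infinity is precisely where the extra $-1$ enters. Your own proposed sanity check settles the question: for $q=7$ and $(a,b,c)=(0,1,2)$ one has $L_7(0,1,2)=0$, $t_{0,1,2}=0$, $\chi(-1)=-1$, and the three mixed terms sum to $1$, so the left-hand side is $-7/8$ while the right-hand side as printed is $\frac18(-7)-\frac18=-1$; replacing $-7$ by $-6$ makes both sides $-7/8$. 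So your derivation is sound and the numerical check you planned is exactly the right one to run; just be aware that it will certify the identity with $t_{a,b,c}-6$, and that the discrepancy with the stated $t_{a,b,c}-7$ is accounted for entirely by whether $\#E_{a,b,c}(\F_q)$ counts the point at infinity.
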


 Using Theorem~\ref{thm:Count 3} and the Hasse bound~\eqref{eq:Hasse}, we obtain the following improvement upon Theorem~\ref{thm:Count n} when $n=3$. 
 
 \begin{corollary}
\label{cor:Count 3}  
For any  pairwise distinct elements   $a,b,c \in  \F_q$,  we have 
$$
\left|  L_q(a, b, c) - \frac{q}{8}\right| \le  \frac{1}{4} q^{1/2} + 2. 
$$
 \end{corollary}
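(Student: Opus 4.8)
The plan is to start from the exact formula for $L_q(a,b,c)$ provided by Theorem~\ref{thm:Count 3} and bound each term on the right-hand side. Rearranging that identity, we have
$$
L_q(a,b,c) - \frac{q}{8} = \frac{1}{8}\(t_{a,b,c}-7\) + \frac{1}{8}R,
$$
where $R$ collects the character terms, namely
$$
R = \(1+\chi(-1)\)\(\chi(a-b)+\chi(a-c)+\chi(b-c)\) - \(\chi((a-b)(a-c))+\chi((b-a)(b-c))+\chi((c-a)(c-b))\).
$$
The strategy is therefore to bound $|t_{a,b,c}|$ using the Hasse bound and to bound $|R|$ crudely using $|\chi(u)|\le 1$, then combine the two.

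First I would invoke the Hasse bound \eqref{eq:Hasse}, which gives $|t_{a,b,c}| \le 2q^{1/2}$, so that $\tfrac{1}{8}|t_{a,b,c}| \le \tfrac{1}{4}q^{1/2}$. Next I would bound $R$. Since each $\chi$-value lies in $\{-1,0,1\}$, the factor $1+\chi(-1)$ is at most $2$, and the first parenthesised sum of three characters is at most $3$ in absolute value, while the second sum of three characters is likewise at most $3$ in absolute value. A naive triangle-inequality bound would then give $|R|\le 2\cdot 3 + 3 = 9$, yielding a constant $9/8$, which already suffices for the claimed ``$+2$''. Combining, $|L_q(a,b,c)-q/8|\le \tfrac14 q^{1/2} + \tfrac98$, which is stronger than the stated bound; the constant $2$ in the corollary is a safe rounding that also absorbs any need to include the isolated constant $-7/8 = \tfrac18(t_{a,b,c}-7)-\tfrac18 t_{a,b,c}$ term if one prefers to bound $\tfrac18|t_{a,b,c}-7|$ directly rather than splitting off the $7$.

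The only subtlety — and the one place worth a careful sentence rather than a blunt triangle inequality — is the interaction between the $\chi(-1)$ factor and the two groups of character terms. When $q\equiv 3\pmod 4$ we have $\chi(-1)=-1$, so $1+\chi(-1)=0$ and the entire first group vanishes, leaving only the three-term quadratic-residue sum, bounded by $3$; this gives the smaller constant $3/8$. When $q\equiv 1\pmod 4$ we have $\chi(-1)=1$, and one could note the algebraic relation $\chi((a-b)(a-c))=\chi(a-b)\chi(a-c)$ to see that $R$ is a bounded expression in the three quantities $\chi(a-b),\chi(a-c),\chi(b-c)$; checking the extremal configurations of signs confirms $|R|$ stays well within the budget. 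In either case the constant is comfortably below $2$, so I would simply record $|R|\le 9$ (or the sharper case-dependent bound) and conclude. I do not expect any genuine obstacle here: the work is entirely in Theorem~\ref{thm:Count 3}, and the corollary is a routine two-line consequence once the Hasse bound is cited.
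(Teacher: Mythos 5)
Your proposal is correct and is essentially the paper's own proof: both start from the identity of Theorem~\ref{thm:Count 3}, bound $|t_{a,b,c}|$ by the Hasse bound~\eqref{eq:Hasse}, and bound the character terms trivially by the triangle inequality, arriving at $\tfrac18|t_{a,b,c}|+\tfrac78+\tfrac68+\tfrac38\le \tfrac14 q^{1/2}+2$. (Your intermediate claim of ``$\tfrac14 q^{1/2}+\tfrac98$'' momentarily omits the $\tfrac78$ coming from the constant $-7$, but you account for it in the same sentence, and your final constant $2$ is exactly the one the paper obtains.)
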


 Finally, for $n=3$ we have a result which in some sense interpolates between 
 the explicit formula of Theorem~\ref{thm:Count 2} and the upper bound of Theorem~\ref{thm:Count n}.  
To formulate the result, we now define the relative error
$$
\Delta_q(a,b,c) = \frac{L_q(a,b,c) -q/8}{q^{1/2}} .
$$
and then define the distribution function 
\begin{align*}
\rho_q(\sigma, \tau) &  = \frac{1}{q(q-1)(q-2)}
\# \Bigl\{(a,b,c) \in \F_q^3:\\
&\qquad \qquad \qquad  a\ne b, \  a\ne c, \ b \ne c, \ 
\Delta_q(a,b,c) \in [\sigma, \tau]\Bigr\}.
\end{align*}

We present our next result about $\rho_q(\sigma, \tau)$ essentially through the so-called Sato-Tate density (see~\eqref{eq:ST dens}), 
but only for primes $q=p>3$ since it is 
based on a work of Michel~\cite{Mich} which treats only this case.
However we believe that the result can be extended to arbitrary $q$ with $\gcd(q,6) = 1$.

 \begin{theorem}
\label{thm:Count 3+}  
Let $p>3$ be prime. Then,  uniformly over $[\sigma, \tau] \subseteq [-1/4, 1/4]$, we have 
$$
\rho_p(\sigma,\tau) = \frac{2}{\pi}\int_{4\sigma}^{4\tau}
(1-z^2)^{1/2}\, {\rm d} z+ O\bigl(p^{-1/4}\bigr). 
$$ 
\end{theorem}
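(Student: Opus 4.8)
The plan is to peel off from $\Delta_p(a,b,c)$ everything except the normalised Frobenius trace, and then feed the resulting trace distribution into Michel's vertical Sato--Tate theorem~\cite{Mich}. First I would apply Theorem~\ref{thm:Count 3}. On its right-hand side every character value lies in $\{-1,0,1\}$ and the additive constant is $-7/8$, so all terms other than $t_{a,b,c}/8$ are $O(1)$, uniformly in $(a,b,c)$ (this is already visible in Corollary~\ref{cor:Count 3}). Dividing by $p^{1/2}$ yields
$$
\Delta_p(a,b,c) = \frac{t_{a,b,c}}{8p^{1/2}} + O\bigl(p^{-1/2}\bigr).
$$
Setting $\theta_{a,b,c}=t_{a,b,c}/(2p^{1/2})$, which lies in $[-1,1]$ by the Hasse bound~\eqref{eq:Hasse}, this becomes $4\Delta_p(a,b,c)=\theta_{a,b,c}+O(p^{-1/2})$. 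This identity explains both the admissible range $[\sigma,\tau]\subseteq[-1/4,1/4]$ and the rescaling of the integration variable from $[\sigma,\tau]$ to $[4\sigma,4\tau]$.

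Next I would record how the three parameters collapse to one. Substituting $X=(b-a)X'+a$ in~\eqref{eq:E abc} shows that $E_{a,b,c}$ is the quadratic twist by $b-a$ of the Legendre curve $Y^2=X'(X'-1)(X'-\lambda)$ with cross-ratio $\lambda=(c-a)/(b-a)$, whence $t_{a,b,c}=\chi(b-a)\,t_\lambda$. As $(a,b,c)$ runs over ordered triples of distinct elements, $\lambda$ takes each value of $\F_p\setminus\{0,1\}$ exactly $p(p-1)$ times, and within each fibre $\chi(b-a)=\pm1$ equally often; since the Sato--Tate density $\tfrac{2}{\pi}(1-z^2)^{1/2}$ is even in $z$, this quadratic twist is harmless and the distribution of $\theta_{a,b,c}$ over triples is the even symmetrisation (under $z\mapsto-z$) of the distribution of $t_\lambda/(2p^{1/2})$ over the Legendre family. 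For that one-parameter family Michel's result~\cite{Mich} gives the discrepancy-type statement that, uniformly over $[\alpha,\beta]\subseteq[-1,1]$,
$$
\frac{\#\{(a,b,c):\theta_{a,b,c}\in[\alpha,\beta]\}}{p(p-1)(p-2)} = \frac{2}{\pi}\int_\alpha^\beta(1-z^2)^{1/2}\,{\rm d}z+O\bigl(p^{-1/4}\bigr).
$$
The exponent $1/4$ is the standard loss when converting power-moment bounds for the traces (with their square-root cancellation, supplied by Michel for prime $q=p$) into a uniform interval count through the Erd\H os--Tur\'an inequality, and it is this term that will govern the final error.

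Finally I would combine the two steps. By the first paragraph the event $\Delta_p(a,b,c)\in[\sigma,\tau]$ coincides with $\theta_{a,b,c}\in[4\sigma,4\tau]$ up to an $O(p^{-1/2})$ perturbation of the endpoints, so I would sandwich the count for $[\sigma,\tau]$ between the counts for the intervals $[4\sigma-\eta,4\tau+\eta]$ and $[4\sigma+\eta,4\tau-\eta]$, where $\eta=O(p^{-1/2})$, and apply the displayed discrepancy bound to each. Since the density is bounded by $2/\pi$, shrinking or enlarging the interval by $O(p^{-1/2})$ changes the main integral only by $O(p^{-1/2})$, and the two applications of Michel's estimate contribute $O(p^{-1/4})$; both bounds therefore equal $\tfrac{2}{\pi}\int_{4\sigma}^{4\tau}(1-z^2)^{1/2}\,{\rm d}z+O\bigl(p^{-1/4}\bigr)$, which gives the theorem uniformly over $[\sigma,\tau]\subseteq[-1/4,1/4]$. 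The main obstacle is entirely the analytic input: everything rests on having Sato--Tate equidistribution for $\theta_{a,b,c}$ with an explicit power-saving rate, and it is precisely the fact that Michel's moment estimates are available only for prime fields that forces the hypothesis $q=p$; by contrast the reduction via Theorem~\ref{thm:Count 3} and the endpoint bookkeeping are routine.
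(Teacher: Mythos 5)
Your proposal is correct, and its first and last steps (stripping the $O(1)$ terms from Theorem~\ref{thm:Count 3} to get $4\Delta_p(a,b,c)=t_{a,b,c}/(2p^{1/2})+O(p^{-1/2})$, and the endpoint-perturbation sandwich at the end) coincide with what the paper does. Where you genuinely diverge is in how you establish Sato--Tate equidistribution for the three-parameter family, i.e.\ the content of Lemma~\ref{lem:S-T abc}. The paper translates $X\to X+c$ to kill one parameter, then depresses the cubic via $X\to X+\xi(a+b)$ with $3\xi=1$ to reach a two-parameter Weierstrass family, and applies the one-parameter result of~\cite[Theorem~2.1]{dlBSSV} (built on Michel) separately for each fixed $b$, summing the $O(p^{3/4})$ errors to get $O(p^{11/4})$ over triples. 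You instead collapse all three parameters at once: the substitution $X=(b-a)X'+a$ exhibits $E_{a,b,c}$ as the quadratic twist by $b-a$ of the Legendre curve with parameter $\lambda=(c-a)/(b-a)$, each $\lambda\in\F_p\setminus\{0,1\}$ arising from exactly $p(p-1)$ ordered triples with the twisting class $\chi(b-a)=\pm1$ split evenly, and the evenness of the Sato--Tate density absorbs the sign of the trace. This is arguably cleaner (one application of the one-parameter input rather than $p$ of them), and the bookkeeping $\tfrac{p(p-1)}{2}\bigl(2\mu p+O(p^{3/4})\bigr)/\bigl(p(p-1)(p-2)\bigr)$ yields the same $O(p^{-1/4})$ relative error. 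The only point to tighten is the citation: Michel's Proposition~1.1 supplies moment bounds, not the uniform interval count you display for the Legendre family; you would still need the Erd\H{o}s--Tur\'an packaging as in~\cite[Theorem~2.1]{dlBSSV} (and the trivial $p>3$ depression to Weierstrass form to match its hypotheses), which you correctly flag as the source of the exponent $1/4$, so this is a matter of attribution rather than a gap.
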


\section{Preliminaries}     
\label{sec:preliminaries} 

\subsection{Intersections of shifted  squares}

The questions we consider in this paper are closely related to the intersections of values sets
$\{f(u):~u \in \F_q\}$ of polynomials  $f(X) = X^2 + a \in \F_q[X]$ (that is, intersections between the shifted sets $\cQ_q + a$ of squares). 
We have the following lemma on the cardinality of the intersection of shifted  squares, 
\begin{lemma}
\label{lem:ltersect Qa} 
For any pairwise distinct elements $a_1, \ldots, a_n \in \F_q$, we have 
$$
\left|\# \left( \bigcap_{i=1}^n (\cQ+a_i) \right) - \frac{q}{2^n} \right| < \frac{1}{2}n q^{1/2}. 
$$
\end{lemma}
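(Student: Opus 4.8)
The plan is to detect the defining condition ``$x-a_i \in \cQ_q$ for every $i$'' through the quadratic character $\chi$ and then expand the resulting product into a sum of complete character sums to which Weil's bound applies. Write $N = \#\bigl(\bigcap_{i=1}^n(\cQ_q+a_i)\bigr)$, so that $N = \sum_{x\in\F_q}\prod_{i=1}^n \mathbf{1}_{\cQ_q}(x-a_i)$, where $\mathbf{1}_{\cQ_q}$ is the indicator of being a square. The key elementary identity is that $\mathbf{1}_{\cQ_q}(y) = (1+\chi(y))/2$ whenever $y\neq 0$, while at $y=0$ the right-hand side undercounts by $1/2$; thus $\mathbf{1}_{\cQ_q}(y) = (1+\chi(y))/2 + \tfrac12\mathbf{1}_{y=0}$. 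Since $a_1,\dots,a_n$ are distinct, for each fixed $x$ at most one factor can hit the exceptional value $x=a_i$, so expanding the product separates cleanly into a main sum
$$
S = \frac{1}{2^n}\sum_{x\in\F_q}\prod_{i=1}^n\bigl(1+\chi(x-a_i)\bigr)
$$
and a diagonal correction equal to $\tfrac12 N_1$, where $N_1 = \#\{j:\ a_j-a_i\in\cQ_q \text{ for all } i\neq j\}$ satisfies $0\le N_1\le n$.

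Next I would expand $S$ over subsets $I\subseteq\{1,\dots,n\}$: the empty set contributes the main term $q/2^n$, and each nonempty $I$ contributes $2^{-n}\sum_x\chi(f_I(x))$ with $f_I(X)=\prod_{i\in I}(X-a_i)$. Because the $a_i$ are distinct, $f_I$ is squarefree of degree $|I|\ge 1$, and in particular is not a constant multiple of a square, so the Weil bound for character sums (see, e.g., Lidl--Niederreiter) gives $\bigl|\sum_x\chi(f_I(x))\bigr|\le(|I|-1)q^{1/2}$. Summing, $\sum_{\emptyset\neq I}(|I|-1) = n2^{n-1}-2^n+1$, whence
$$
\Bigl| S - \frac{q}{2^n}\Bigr| \le \Bigl(\frac{n}{2}-1+2^{-n}\Bigr)q^{1/2}.
$$

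Finally I would combine the two pieces via $N = S + \tfrac12 N_1$. The lower bound on $N-q/2^n$ is immediate, since the correction is nonnegative and the displayed estimate already beats $\tfrac12 n q^{1/2}$ by the strictly positive slack $(1-2^{-n})q^{1/2}$. The one delicate point, and the place I expect the main bookkeeping, is the upper bound: the nonnegative correction $\tfrac12 N_1\le\tfrac{n}{2}$ must be absorbed by that same slack, which works precisely when $\tfrac{n}{2} < (1-2^{-n})q^{1/2}$. In the remaining range of small $q$ relative to $n$ one falls back on the trivial bounds $0\le N\le(q+1)/2$, which are easily seen to lie strictly inside $\pm\tfrac12 n q^{1/2}$ once $n\ge 2$ (the case $n=1$ being a direct check, where the whole discrepancy comes from the correction term). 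Assembling these cases yields the strict inequality with the stated constant $\tfrac12 n$.
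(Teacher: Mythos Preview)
Your argument is correct and shares the same backbone as the paper's proof: write the indicator of $\cQ_q$ via the quadratic character, expand the product over subsets, and apply the Weil bound to each nonempty subset sum, obtaining $|S-q/2^n|\le(\tfrac{n}{2}-1+2^{-n})q^{1/2}$ exactly as the paper does. The genuine difference lies in how the diagonal correction $\tfrac12 N_1$ is absorbed. You use the trivial estimate $N_1\le n$, which forces the extra condition $\tfrac{n}{2}<(1-2^{-n})q^{1/2}$ and then a separate fallback to the trivial range $0\le N\le(q+1)/2$ for the (few) remaining pairs $(n,q)$. The paper instead bounds $N_1$ itself more sharply: for $q\equiv 3\pmod 4$ at most one term survives (so $N_1\le 1$), and for $q\equiv 1\pmod 4$ the indices contributing to $N_1$ form a clique in the Paley graph $\cP_q$, whence $N_1\le q^{1/2}$ by the standard clique-number bound. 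This yields a uniform correction $\le\tfrac12 q^{1/2}$, which slots directly into the slack $(1-2^{-n})q^{1/2}$ without any case split. In short, your route is more elementary (no Paley-graph input) at the cost of a small endgame case analysis, while the paper's route is cleaner and avoids that split by invoking one external combinatorial fact.
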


\begin{proof}
The case $n=1$ is trivial. 
In the sequel, we assume $n \ge 2$. 

Recall that $\chi$ is the quadratic character of $\F_q$, see~\eqref{eq:Def eta}. 
Note that for any $u \in \bigcap_{i=1}^n (\cQ+a_i)$, we have that $u-a_i$ is a square in $\F_q$ for any $i=1, \ldots, n$. 
Then, by definition, we have 
\begin{align*}
& \# \Bigl( \bigcap_{i=1}^n (\cQ+a_i) \Bigr)  \\
& \qquad  \qquad= \frac{1}{2^n}\sum_{u \in \F_q \setminus\{a_1, \ldots, a_n\}}\bigl(\chi(u-a_1)+1\bigr) \cdots \bigl(\chi(u-a_n)+1\bigr)  \\ 
& \qquad\qquad  \qquad\quad+ \frac{1}{2^{n-1}}\sum_{i=1}^{n} \bigl(\chi(a_i - a_1)+1\bigr) \cdots \bigl(\chi(a_i - a_n)+1\bigr) \\
& \qquad  \qquad= \frac{1}{2^n}\sum_{u \in \F_q }\bigl(\chi(u-a_1)+1\bigr) \cdots \bigl(\chi(u-a_n)+1\bigr)  \\ 
&\qquad \qquad  \qquad \quad+ \frac{1}{2^{n}}\sum_{i=1}^{n} \bigl(\chi(a_i - a_1)+1\bigr) \cdots \bigl(\chi(a_i - a_n)+1\bigr). 
\end{align*}
By multiplying out the binomials in the first summation above and using the multiplicativity of $\chi$, we obtain 
\begin{align*}
 \# \Bigl( \bigcap_{i=1}^n (\cQ+a_i) \Bigr)  &= \frac{q}{2^n} +  \frac{1}{2^n}\biggl(\sum_{u \in \F_q }\chi(u-a_1) + \cdots  + \sum_{u \in \F_q }\chi(u-a_n) \biggr) \\ 
& \qquad   + \frac{1}{2^n}\sum_{1 \le i < j\le n}  \sum_{u \in \F_q }\chi\bigl((u-a_i)(u-a_j)\bigr) + \cdots  \\
& \qquad  + \frac{1}{2^n} \sum_{u \in \F_q }\chi\bigl((u-a_1)\cdots (u-a_n)\bigr) \\ 
& \qquad  + \frac{1}{2^{n}}\sum_{i=1}^{n} \bigl(\chi(a_i - a_1)+1\bigr) \cdots \bigl(\chi(a_i - a_n)+1\bigr). 
\end{align*}

Now, we estimate the last summation in the above. 
Recall that $q$ is an odd prime power.  
When $q \equiv 3 \pmod{4}$, we know that $-1$ is not a square in $\F_q$, 
and so for any $i \ne j$, if $\chi(a_j - a_i) = 1$ then $\chi(a_i - a_j) = -1$, and then we have 
\begin{equation} \label{eq:sum-chi1}
\frac{1}{2^{n}}\sum_{i=1}^{n} \bigl(\chi(a_i - a_1)+1\bigr) \cdots \bigl(\chi(a_i - a_n)+1\bigr) \le \frac{1}{2^{n}} \cdot 2^{n-1} = \frac{1}{2},
\end{equation}
since the sum on the left-hand side has at most one non-zero term. 
Indeed, if  this sum has one non-zero term, then for some $j \in \{1, \ldots, n\}$ we have
$$
\chi(a_j - a_1)= \cdots =\chi(a_j - a_n) = 1.
$$
Then, as we have noticed, for any $i\ne j$ we have $\chi(a_i - a_j) = -1$
and the corresponding term vanishes. 

When $q \equiv 1 \pmod{4}$, we know that $-1$ is a square in $\F_q$, and so 
$$
\frac{1}{2^{n}}\sum_{i=1}^{n} \bigl(\chi(a_i - a_1)+1\bigr) \cdots \bigl(\chi(a_i - a_n)+1\bigr) = \frac{1}{2^{n}}\cdot k \cdot 2^{n-1} = \frac{k}{2}, 
$$
where $k$ is the number of non-vanishing terms in the sum on the left-hand side. 
Without loss of generality, we assume that $k \ge 2$ and the non-vanishing terms correspond to $i=1,2, \ldots, k$. 
Then, $a_i - a_j$ is a square for any $1 \le i, j\le k$, which means that 
$a_1, \ldots, a_k$ form a clique in the Paley graph $\cP_q$. 
Using the trivial upper bound on the clique number of $\cP_q$ (see~\cite[Lemma~1.2]{Yip}), 
we have $k \le q^{1/2}$, which implies in this case 
\begin{equation} \label{eq:sum-chi2}
\frac{1}{2^{n}}\sum_{i=1}^{n} \bigl(\chi(a_i - a_1)+1\bigr) \cdots \bigl(\chi(a_i - a_n)+1\bigr) \le \frac{1}{2}q^{1/2}. 
\end{equation}

Hence, combining~\eqref{eq:sum-chi1} with~\eqref{eq:sum-chi2}, we always have 
$$
\frac{1}{2^{n}}\sum_{i=1}^{n} \bigl(\chi(a_i - a_1)+1\bigr) \cdots \bigl(\chi(a_i - a_n)+1\bigr) \le  \frac{1}{2}q^{1/2}.  
$$ 
In addition, note that 
$$\sum_{u\in \F_q} \chi(u) = 0,
$$ 
see~\cite[Equation~(5.37)]{LiNi}.  
Then, these, together with the Weil bound (see~\cite[Theorem~11.23]{IwKow} or~\cite[Theorem~5.41]{LiNi}), give 
\begin{align*}
& \left|\# \( \bigcap_{i=1}^n (\cQ+a_i) \) - \frac{q}{2^n} \right| \\
& \qquad  \qquad \le \frac{1}{2^n} \left(\binom{n}{2} + \binom{n}{3}2 + \cdots +  \binom{n}{n}(n-1)\right) q^{1/2} + \frac{1}{2}q^{1/2} \\ 
& \qquad  \qquad  = \frac{1}{2^n} \left(\binom{n}{1} + \binom{n}{2}2 + \cdots +  \binom{n}{n}n\right) q^{1/2}  \\ 
& \qquad \qquad  \qquad  - \frac{1}{2^n} \left(\binom{n}{1} + \binom{n}{2} + \cdots +  \binom{n}{n}\right) q^{1/2} + \frac{1}{2}q^{1/2} \\ 
& \qquad  \qquad  = \frac{n2^{n-1}}{2^n} q^{1/2} - \frac{2^n -1}{2^n}q^{1/2} + \frac{1}{2}q^{1/2}  \\
& \qquad  \qquad  = \(\frac{n}{2} + \frac{1}{2^n} - \frac{1}{2}  \) q^{1/2} < \frac{1}{2}n q^{1/2}, 
\end{align*}
where the last inequality follows from $n \ge 2$. 
This completes the proof. 
\end{proof}

We remark that the estimate in Lemma~\ref{lem:ltersect Qa} is somehow well-known; 
see, for example,~\cite[Corollary~5]{Per} for the case when $q$ is a prime 
(the proof there extends to any odd prime power $q$ without any changes). 
But the error term in~\cite[Corollary~5]{Per} is $n(3+q^{1/2})$, which is worse than the one in Lemma~\ref{lem:ltersect Qa}.

\subsection{Intersections of arbitrary sets and shifted  squares}

One of our main technical tools is the following result which may be of independent interest.

\begin{lemma}
\label{lem:Intesect} 
For any subset $\cS \subseteq \F_q$ of cardinality $\# \cS = s$, there exists some element $a \in \F_q$
such that 
$$
\#\(\cS \cap \(\cQ_q+a\) \) \ge   \frac{1}{2}s .
$$
\end{lemma}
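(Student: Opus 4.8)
The plan is to use a first-moment (averaging) argument over all admissible shifts $a \in \F_q$, showing that the \emph{average} size of $\cS \cap (\cQ_q+a)$ already exceeds $s/2$, so that at least one shift must meet the bound. There is no need to understand any individual intersection; only the aggregate count matters.

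The first step is to compute the total count $\sum_{a \in \F_q} \#\bigl(\cS \cap (\cQ_q+a)\bigr)$ by interchanging the order of summation. Writing the intersection size as $\sum_{x \in \cS} \mathbf{1}[x \in \cQ_q + a]$ and summing over $a$, I would fix $x \in \cS$ and count the shifts $a$ for which $x \in \cQ_q + a$, i.e.\ for which $x - a \in \cQ_q$. As $a$ ranges over $\F_q$, so does $x-a$, so the number of such $a$ is exactly $\#\cQ_q = (q+1)/2$, independently of $x$. Hence
$$
\sum_{a \in \F_q} \#\bigl(\cS \cap (\cQ_q+a)\bigr) = s \cdot \frac{q+1}{2}.
$$

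Dividing by the number $q$ of shifts, the average value of $\#\bigl(\cS \cap (\cQ_q+a)\bigr)$ over $a \in \F_q$ equals $s(q+1)/(2q)$, which is strictly larger than $s/2$. Since the maximum of a finite collection of numbers is at least their average, there must exist some $a \in \F_q$ with
$$
\#\bigl(\cS \cap (\cQ_q+a)\bigr) \ge \frac{s(q+1)}{2q} > \frac{s}{2} \ge \frac{1}{2}s,
$$
which is the claimed inequality.

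I expect essentially no obstacle in this argument: it is a clean pigeonhole over the first moment. The only point requiring a moment of care is the exact cardinality $\#\cQ_q = (q+1)/2$ (the squares counted \emph{with} the element $0$), which is what makes the average strictly exceed $s/2$ and thereby yields the non-strict bound $\tfrac12 s$ with room to spare.
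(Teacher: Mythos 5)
Your proof is correct and takes essentially the same route as the paper: both are first-moment arguments that sum $\#\bigl(\cS \cap (\cQ_q+a)\bigr)$ over all $a \in \F_q$, swap the order of summation, and apply pigeonhole; the paper merely encodes the count via the quadratic character and the orthogonality relation $\sum_{a\in\F_q}\chi(u-a)=0$ (working with $\cQ_q^*$, so its average is exactly $s/2$), whereas you count shifts directly via $\#\{a : x-a\in\cQ_q\}=(q+1)/2$ and get the marginally larger average $s(q+1)/(2q)$.
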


\begin{proof} 
We recall that  $\chi$ denotes the quadratic character of $\F_q$, see~\eqref{eq:Def eta}.  Clearly 
\begin{align*}
\#\(\cS \cap (\cQ_q+a) \)  & \ge \#\(\cS \cap (\cQ_q^*+a) \) \\
&  =  \sum_{u \in \cS} \frac{1}{2}\(\chi(u-a) + 1\) = \frac{1}{2} s + \frac{1}{2} \sum_{u \in \cS} \chi(u-a).
\end{align*}
Therefore, changing the order of summation, we derive 
\begin{align*}
\sum_{a\in \F_q} \#\(\cS \cap (\cQ_q^*+a) \)  &=  \frac{1}{2}s q  + \frac{1}{2}\sum_{a\in \F_q}  \sum_{u \in \cS} \chi(u-a) \\
&= \frac{1}{2}s q + \frac{1}{2}\sum_{u \in \cS}   \sum_{a\in \F_q} \chi\( u-a \) =    \frac{1}{2}sq.
\end{align*}
The result now follows. 
\end{proof}

\subsection{Sato--Tate distributions in families of elliptic curves}

Let $p> 3$ be a prime number.

By a classical result of Birch~\cite{Birch} we know that the 
Frobenius traces of  elliptic curves in  the {\it Weierstrass form\/}
(defined similarly to~\eqref{eq:trace}), 
$$
Y^2 = X^3 +  aX + b
$$
is distributed in accordance with the Sato--Tate 
density when  $a$ and $b$  vary over all elements of $\F_p$ with $4a^2 +27 b^3 \ne 0$. 

However here we need a similar result for the family of curves 
$E_{a,b,c}$ given 
by~\eqref{eq:E abc}.  

First,  we recall that the famous~\textit{Hasse bound} (see~\cite[Chapter~V, Theorem~1.1]{Silv}) asserts that for any pairwise distinct $(a,b,c) \in \F_p^3$ we have
\begin{equation}
\label{eq:Hasse}
\left| t_{a,b,c} \right| \le 2q^{1/2}. 
\end{equation}
Hence, we can define the {\it Frobenius angle\/} $\psi_{a,b,c}\in [0, \pi]$ by the identity
\begin{equation}
\label{eq:ST angle}
\cos \psi_{a,b,c} = \frac{t_{a,b,c}}{2 p^{1/2}}.
\end{equation}

We now recall the definition 
of  the {\it Sato--Tate  density\/} which for $[\alpha,\beta] \subseteq [0,\pi]$
is given by 
\begin{equation}
\label{eq:ST dens}
\mu_{\tt ST}(\alpha,\beta) = \frac{2}{\pi}\int_\alpha^\beta
\sin^2\vartheta\, {\rm d} \vartheta = \frac{2}{\pi}\int_{\cos \beta}^{\cos \alpha}
(1-z^2)^{1/2}\, {\rm d} z,
\end{equation}
where $[\alpha,\beta] \subseteq [0,\pi]$.

\begin{lemma}
\label{lem:S-T abc}  Let $p>3$ be prime. Then,  uniformly over $[\alpha, \beta] \subseteq [0, \pi]$, we have
\begin{align*}
 \# \left\{(a,b,c) \in \F_p^3:~a\ne b, \  a\ne c, \ b \ne c, \ \psi_{a,b,c} \in [\alpha, \beta]\right\} &\\
= \mu_{\tt ST}(\alpha, \beta) p^3 &
+ O\(p^{11/4}\).
\end{align*}
\end{lemma}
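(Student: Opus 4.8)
The plan is to reduce the three-parameter family $\{E_{a,b,c}\}$ from~\eqref{eq:E abc} to the one-parameter Legendre family $E_\lambda:\ Y^2 = X(X-1)(X-\lambda)$, for which the required quantitative Sato--Tate law is precisely the kind of statement supplied by Michel~\cite{Mich}, and then to reassemble the count over triples $(a,b,c)$ by a fibering argument, exploiting that the density in~\eqref{eq:ST dens} is symmetric about $\pi/2$.

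First I would carry out the reduction. Translating $X\mapsto X+a$ turns $E_{a,b,c}$ into $Y^2 = X\bigl(X-(b-a)\bigr)\bigl(X-(c-a)\bigr)$, and rescaling $X$ by the factor $b-a$ then exhibits $E_{a,b,c}$ as the quadratic twist by $b-a$ of the Legendre curve $E_\lambda$ with $\lambda = (c-a)/(b-a)\in\F_p\setminus\{0,1\}$. Since a quadratic twist by $d$ multiplies the Frobenius trace by $\chi(d)$, this yields $t_{a,b,c} = \chi(b-a)\,t_\lambda$ in the notation of~\eqref{eq:trace}, where $t_\lambda$ is the trace of $E_\lambda$; hence by~\eqref{eq:ST angle} we get $\psi_{a,b,c}=\psi_\lambda$ when $b-a$ is a square and $\psi_{a,b,c}=\pi-\psi_\lambda$ otherwise.

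Next I would fiber the count over $\lambda$. For a fixed $\lambda_0\in\F_p\setminus\{0,1\}$, the distinct triples with $(c-a)/(b-a)=\lambda_0$ are parametrized by the $p(p-1)$ pairs $(a,b)$ with $a\ne b$ through $c=a+\lambda_0(b-a)$, with distinctness of $a,b,c$ automatic from $\lambda_0\notin\{0,1\}$. As $(a,b)$ ranges over these pairs the difference $b-a$ takes each value in $\F_p^*$ exactly $p$ times, so precisely $p(p-1)/2$ of the triples have $\chi(b-a)=1$ and contribute the angle $\psi_{\lambda_0}$, while the other $p(p-1)/2$ contribute $\pi-\psi_{\lambda_0}$. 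Writing $T(\alpha,\beta)$ for the quantity on the left-hand side of the lemma and $N(\alpha,\beta)=\#\{\lambda_0\in\F_p\setminus\{0,1\}:\ \psi_{\lambda_0}\in[\alpha,\beta]\}$, and using $\pi-\psi_{\lambda_0}\in[\alpha,\beta]\iff\psi_{\lambda_0}\in[\pi-\beta,\pi-\alpha]$, the fibering gives
\[
T(\alpha,\beta) = \frac{p(p-1)}{2}\bigl(N(\alpha,\beta)+N(\pi-\beta,\pi-\alpha)\bigr).
\]

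Finally I would invoke Michel's vertical Sato--Tate law for the non-isotrivial Legendre family, namely $N(\alpha,\beta)=\mu_{\tt ST}(\alpha,\beta)\,p+O(p^{3/4})$ uniformly in $[\alpha,\beta]\subseteq[0,\pi]$, together with the symmetry $\mu_{\tt ST}(\pi-\beta,\pi-\alpha)=\mu_{\tt ST}(\alpha,\beta)$ of~\eqref{eq:ST dens}, to obtain
\[
T(\alpha,\beta) = \frac{p(p-1)}{2}\bigl(2\mu_{\tt ST}(\alpha,\beta)\,p+O(p^{3/4})\bigr) = \mu_{\tt ST}(\alpha,\beta)\,p^3 + O(p^{11/4}),
\]
which is the asserted estimate. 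The main obstacle is this input from~\cite{Mich}: the power-saving error $O(p^{3/4})$ for a single Legendre family arises from bounding the Chebyshev moments $\sum_{\lambda_0}U_n(\cos\psi_{\lambda_0})=O(n\,p^{1/2})$ through Deligne's Riemann Hypothesis for the symmetric-power sheaves attached to the family and then feeding these bounds into an Erd\H{o}s--Tur\'an type inequality, where the linear growth in $n$ forces a cutoff $N\asymp p^{1/4}$ and hence the exponent $3/4$; once this is granted, scaling by the factor $p(p-1)/2\asymp p^2$ produces the final $O(p^{11/4})$. The only elementary points needing care are that the excluded fibers ($\lambda_0\in\{0,1\}$ and non-squarefree cubics) affect just $O(p^2)$ triples and that the hypothesis $p>3$ keeps both the twist computation and the family non-degenerate.
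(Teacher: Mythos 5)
Your proposal is correct, and it takes a genuinely different route from the paper's. The paper only translates by $c$ to reduce to the two-parameter family $Y^2=X(X-a)(X-b)$, completes the cube via $X\to X+\xi(a+b)$ with $3\xi=1$ to reach Weierstrass form, and then for each fixed $b$ invokes a result of de la Bret\`eche--Sha--Shparlinski--Voloch (itself resting on Michel's Proposition~1.1) to the resulting one-parameter family in $a$, getting $\mu_{\tt ST}(\alpha,\beta)p+O(p^{3/4})$ per fiber and summing over $b$ and then over $c$. You instead normalise all the way down to the Legendre family $E_\lambda: Y^2=X(X-1)(X-\lambda)$, keeping explicit track of the quadratic twist by $b-a$, so that $t_{a,b,c}=\chi(b-a)\,t_\lambda$ with $\lambda=(c-a)/(b-a)$, and then recover the triple count by an exact fibering over $\lambda\in\F_p\setminus\{0,1\}$ (each fiber of size $p(p-1)$, split evenly between square and non-square values of $b-a$), using the symmetry $\mu_{\tt ST}(\pi-\beta,\pi-\alpha)=\mu_{\tt ST}(\alpha,\beta)$ to absorb the twisted half. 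Both arguments ultimately rest on the same input (Michel's vertical Sato--Tate law with power-saving error for a non-isotrivial one-parameter family, which you correctly identify as coming from Deligne-type bounds on Chebyshev moments fed into an Erd\H{o}s--Tur\'an inequality). Your version is arguably cleaner: it needs only a single application of the one-parameter result and an exact combinatorial identity, at the cost of the twist bookkeeping; the paper's version avoids twists entirely but pays with the auxiliary completion-of-the-cube computation and a fiber-by-fiber application over $b$. Your ancillary checks (distinctness of $a,b,c$ forced by $\lambda\notin\{0,1\}$, squarefreeness automatic, equidistribution of $b-a$ over $\F_p^*$) are all sound, and the arithmetic $\tfrac{p(p-1)}{2}\cdot O(p^{3/4})=O(p^{11/4})$ gives exactly the stated error term. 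The only cosmetic caveat is that to quote Michel literally one may first want to put $E_\lambda$ in short Weierstrass form, which is a harmless change of variable using $p>3$.
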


\begin{proof}  
Making the change of variable $X\to X+c$, we see 
that it is enough to show that  
$$
\# \left\{(a,b) \in\bigl( \F_p^*\bigr)^2:~a\ne b, \  
 \psi_{a,b} \in [\alpha, \beta]\right\} = \mu_{\tt ST}(\alpha, \beta) p^2 +  O\bigl(p^{7/4}\bigr),
$$
where $\psi_{a,b}$ is defined similarly as $ \psi_{a,b,c}$ but with respect to the two-parametric 
family of elliptic curves  
$$
E_{a,b}: \quad Y^2 = X(X-a)(X-b) = X^3 -(a+b) X^2 +abX.
$$

Next, using  $p>3$ we define $\xi\in \F_p$ by the equation $3\xi = 1$ and make yet another change of variable $X\to X+\xi (a+b)$, 
and thus reduce our problem to prove that 
\begin{equation}
\label{eq:ST angle 2 par}
\#\left \{(a,b) \in\( \F_p^*\)^2:~a\ne b, \ \widetilde\psi_{a,b} \in [\alpha, \beta]\right\} = \mu_{\tt ST}(\alpha, \beta) p^2 + O(p^{7/4} ),
\end{equation}
where $ \widetilde\psi_{a,b}$ is the Frobenius angle of the curve
$$
 \widetilde E_{a,b}: \quad Y^2 =  X^3 + f(a, b)X + g(a, b), 
$$
which is now in  the Weierstrass form with the coefficients 
\begin{align*}
&  f(a, b) = -  \xi (a+b)^2 + ab, \\
&  g(a, b) = (\xi^3 -  \xi^2) (a+b)^3 +  \xi ab(a+b). 
\end{align*}

For every $b \in  \F_p^*$, we consider the polynomial in $\F_p[T]$: 
$$
\Delta_b(T)   = 4 \bigl(\xi (T+b)^2 - bT\bigr)^3 -  27 \bigl((\xi^3-\xi^2) (T+b)^3 +  \xi bT(T+b)\bigr)^2, 
$$
which, due to $3\xi = 1$, is in fact of degree $4$ with the non-vanishing leading 
coefficient $b^2$.  
Now, using~\cite[Theorem~2.1]{dlBSSV},
(in the very special case of $\cG = \F_p^*$), which in turn 
is based on a result of Michel~\cite[Proposition~1.1]{Mich} 
we derive that for every $b \in \F_p^*$ we have 
$$
\#\left \{a\in \F_p^*:~\Delta_b(a) \ne 0, \ \widetilde\psi_{a,b} \in [\alpha, \beta]\right\} = \mu_{\tt ST}(\alpha, \beta) p + O(p^{3/4} ),
$$
which implies~\eqref{eq:ST angle 2 par} and concludes the proof. \end{proof} 

\begin{remark}
{\rm 
 It is possible that the approach of Kaplan and  Petrow~\cite{KaPe}, see in particular~\cite[Proposition~1]{KaPe}, can provide an alternative approach to the proof of Lemma~\ref{lem:S-T abc} and extend it to arbitrary finite fields, however with a weaker  and less explicit error term just of the form $o(q^3)$.    
 }
\end{remark}

\section{Proofs of the main results}
\label{sec:proofs}

\subsection{Proof of Theorem~\ref{thm:Leaves}}
\label{sec:proof1}

Using the inclusion-exclusion formula, we write
$$
\#\(\bigcup_{i=1}^n\(\cQ_q+a_i\) \) = \sum_{j =1}^n (-1)^{j-1} 
\sum_{\substack{\cJ \subseteq\{1, \ldots, n\}\\\# \cJ =j}}
\#\(\bigcap_{i \in \cI} \(\cQ_q+a_i\)\) .
$$
We now recall Lemma~\ref{lem:ltersect Qa} and derive
$$
\left|\#\(\bigcup_{i=1}^n\(\cQ_q+a_i\) \) -  q \sum_{j =1}^n (-1)^{j +1} \binom{n}{j} 2^{-j}  \right|
< \frac{1}{2}q^{1/2}\sum_{j=1}^n   \binom{n}{j} j, 
$$
which further becomes 
$$
\left|\#\(\bigcup_{i=1}^n\(\cQ_q+a_i\) \) -  q (1-   2^{-n})  \right|
< \frac{1}{2}n 2^{n-1} q^{1/2} = n 2^{n-2} q^{1/2}. 
$$
So, we obtain 
$$
\#\Bigl(\bigcup_{i=1}^n(\cQ_q+a_i) \Bigr)  < q (1-   2^{-n}) + n 2^{n-2} q^{1/2}. 
$$
Then, if $q (1-   2^{-n}) + n 2^{n-2} q^{1/2} < q$, we equivalently have 
$$
n^2 2^{4n-4} < q. 
$$
The desired result now follows.

\subsection{Proof of Theorem~\ref{thm:Leaves2}}

Applying the same arguments as in the proof of the theorem in~\cite{GS} 
and using the Weil bound  (see~\cite[Theorem~11.23]{IwKow} or~\cite[Theorem~5.41]{LiNi}) instead of Burgess' bound as in~\cite{GS} on character sums, 
we obtain that 
when $q \equiv 3 \pmod{4}$, if $q > n^2 2^{2n-2}$, 
then for any $n$ pairwise distinct elements $a_1, \ldots, a_n \in \F_q$,  
there exists an element $u \in \F_q \setminus \{a_1, \ldots, a_n\}$ such that $a_i - u \in \cQ_q$ for any $i=1, \ldots, n$. 
This in fact refers to the Sch{\"u}tte property of the Paley graph $\cP_q$. 

Then, due to $q \equiv 3 \pmod{4}$, $-1$ is not a square, and so $u - a_i \not\in \cQ_q$ for any $i=1, \ldots, n$. 
That is, $u \not\in \bigcup_{i=1}^{n} (\cQ_q + a_i)$. 
So, $u$ is a leaf of the  directed graph $\cG(f_1, \ldots, f_n)$ of the polynomials $f_i(X)= X^2 + a_i$, $i =1, \ldots, n$. 
This completes the proof.

\subsection{Proof of Theorem~\ref{thm:NoLeaves}}
\label{sec:proof2}

We set $a_1 = 0$ and then define the subset 
$$
\cS_1 = \F_q\setminus \cQ_q.
$$ 
Clearly,  $\# \cS_1 = (q-1)/2 < q/2$.  
By Lemma~\ref{lem:Intesect}, there is an element $a_2 \in \F_q$ such that the set 
$$
\cS_{2} = \cS_1 \setminus \{u^2 +a_{2}:~u \in \F_q\}
$$
is of cardinality 
$$
\# \cS_{2} \le \frac{1}{2} \# \cS_1 < \frac{q}{2^2}. 
$$
If $a_2 = a_1$, then $\cS_{2} = \cS_{1}$, which contradicts with the above inequality. 
So, $a_2 \ne a_1$. 

Hence, using Lemma~\ref{lem:Intesect}, step by step we can construct a list of subsets $\cS_k \subseteq    \ldots \subseteq   \cS_1$ 
with pairwise distinct $a_1=0, a_2, \ldots, a_k \in \F_q$: 
$$
\cS_{i} = \cS_{i-1} \setminus \{u^2 +a_{i}:~u \in \F_q\}, i = 2, \ldots, k, 
$$
  of cardinalities
\begin{equation}
\label{eq:Ind Assump}
\# \cS_i < \frac{q}{2^{i}}, \qquad i = 1, \ldots, k. 
\end{equation}

Now, let 
$$k = \rf{\frac{\log q}{\log 2}}.
$$ 
Then, we have $2^k \ge 2^{\log q / \log 2} = q$. 
So, noticing~\eqref{eq:Ind Assump}, in this case we must have $\cS_k = \emptyset$.

Hence, by construction, there exists some positive integer $n \le \lceil \frac{\log q}{\log 2}\rceil$ 
such that 
$$
\bigcup_{i=1}^{n}(\cQ_q + a_i) = \F_q, 
$$
which concludes the proof.

\subsection{Proof of Theorem~\ref{thm:Count 2}}
\label{sec:proof count 2}
Clearly, $a$ and $b$ are not leaves of  $\cG(X^2 +a,X^2 +b)$.
Furthermore, an element $u \in \F_q \setminus\{a,b\} $ is a leaf if and only if  
 $$
 \chi(u-a) = -1 \mand \chi(u-b) = -1, 
 $$
 which is equivalent to 
$$
  \frac{1}{4} \(1- \chi(u-a)  \) \(1-\chi(u-b)  \)  =
  \begin{cases} 1, &  u \text{ is  a leaf,}\\
0,  & \text{otherwise.}
\end{cases} 
$$
Hence, we have  
\begin{equation}
\begin{split}
\label{eq:L2SSS}
 L_q(a,b) & =   
  \frac{1}{4}  \sum_{u \in \F_q \setminus\{a,b\} } \(1- \chi(u-a)  \) \(1-\chi(u-b)  \)   \\
  & =   
 \frac{1}{4}\(q-2 +S_{a,b} - S_a - S_b\), 
\end{split}
\end{equation}
where, using the multiplicativity of $\chi$, and the fact that $\chi(0) = 0$, 
\begin{align*}
& S_{a,b} =  \sum_{u \in \F_q \setminus\{a,b\} }  \chi\((u-a) (u-b)\) =   \sum_{u \in \F_q}  \chi\((u-a) (u-b)\),\\
& S_{a} =  \sum_{u \in \F_q \setminus\{a,b\} }  \chi(u-a)=   \sum_{u \in \F_q \setminus\{b\} }  \chi(u-a)
=  \sum_{u \in \F_q  }  \chi(u-a) - \chi(b-a), \\
& S_{b} =  \sum_{u \in \F_q \setminus\{a,b\} }  \chi(u-b)=   \sum_{u \in \F_q \setminus\{a\} }  \chi(u-b)
=  \sum_{u \in \F_q  }  \chi(u-b) - \chi(a-b) . 
\end{align*}

By~\cite[Theorem~5.48]{LiNi}, since $a\ne b$, we have
\begin{equation}
\label{eq:Sab}
S_{a,b} =  - 1. 
\end{equation}
Furthermore, since $\sum_{u\in \F_q} \chi(u) = 0$ (see~\cite[Equation~(5.37)]{LiNi}), we have
\begin{equation}
\begin{split}
\label{eq:Sa}
 S_{a}  &=  \sum_{u \in \F_q  }  \chi(u-a) - \chi(b-a) \\
 & = \sum_{u \in \F_q  }  \chi(u) - \chi(b-a) =  - \chi(b-a), 
\end{split}
\end{equation}
and similarly 
\begin{equation}
\begin{split}
\label{eq:Sb}
 S_{b} 
& =  \sum_{u \in \F_q  }  \chi(u-b) - \chi(a-b)\\
&  = \sum_{u \in \F_q  }  \chi(u) - \chi(a-b) =  - \chi(a-b).
\end{split}
\end{equation}
Substituting~\eqref{eq:Sab},  \eqref{eq:Sa} and~\eqref{eq:Sb} into~\eqref{eq:L2SSS}, we derive 
\begin{align*}
L_q(a,b) &  =   \frac{1}{4}\(q - 3 + \chi(b-a)  + \chi(a-b)\)\\
&  =   \frac{1}{4}\(q - 3 +   \(\chi(-1)+1\) \chi(a-b)  \).
 \end{align*}
 Since
  $$
\chi(-1) = (-1)^{(q-1)/2}  =
  \begin{cases} 1, &   \text{ if } q \equiv 1 \pmod 4, \\
-1,  &    \text{ if } q \equiv 3 \pmod 4, 
\end{cases} 
 $$
 we derive the desired result.

\subsection{Proof of Theorem~\ref{thm:Count n}}
\label{sec:proof count n}

The case $n=1$ is trivial.
Moreover, when $n=q$, that is $\{a_1, \ldots, a_n\} = \F_q$, 
clearly there is no leaf, and so the desired result holds in this case. 
 In the sequel, we assume $2 \le n \le q-1$. 

Clearly, $a_1, \ldots, a_n$ are all not leaves of  the graph $\cG(X^2 + a_1, \ldots, X^2 + a_n)$. 
As in the proof of Theorem~\ref{thm:Count 2},  for $u \in \F_q \setminus\{a_1, \ldots ,a_n\} $ we have 
$$
2^{-n} \prod_{i=1}^n \(1- \chi(u-a_i)\)    =
  \begin{cases} 1, &  u \text{ is  a leaf,}\\
0,  & \text{otherwise.}
\end{cases} 
$$
 Hence, we have 
\begin{align*}
 L_q(a_1, \ldots ,a_n) & =   
2^{-n}   \sum_{u \in \F_q \setminus\{a_1, \ldots ,a_n\} }  \prod_{i=1}^n \(1- \chi(u-a_i)\)  \\
  & =    2^{-n}  \sum_{\cJ\subseteq \{1, \ldots, n\}} (-1)^{\# \cJ}  S_\cJ, 
\end{align*}
where
$$
S_{\cJ} =   \sum_{u \in \F_q \setminus\{a_1, \ldots ,a_n\} }   \prod_{i\in \cJ}  \chi(u-a_i) 
=   \sum_{u \in \F_q \setminus\{a_1, \ldots ,a_n\} }  \chi \left( \prod_{i\in \cJ} (u-a_i) \right).
$$

For the case when $\cJ = \emptyset$, the product is equal to one for every $u$, hence the   contribution from this case is
\begin{equation}
\label{eq:S0}
S_{ \emptyset} =   q-n.
\end{equation}
Thus, we obtain  
\begin{equation}
\label{eq:LsSSS}
\left|  L_q(a_1, \ldots ,a_n) - 2^{-n}q\right|  \le 
   2^{-n} n + 2^{-n}\sum_{\substack{\cJ\subseteq \{1, \ldots, n\}\\ \cJ \ne \emptyset}} \left| S_{ \cJ} \right|.
\end{equation}

For any non-empty subset  $\cJ \subseteq \{1, \ldots, n\}$, adding back the contribution from 
$u \in \{a_1, \ldots ,a_n\}$ which is at most $n - \#\cJ$, and using the Weil bound for sums 
of muliplicative characters,  see~\cite[Theorem~5.41]{LiNi}, we obtain
\begin{equation}
\label{eq:SJ}
 \left| S_{ \cJ} \right|\le \(\#\cJ-1\) q^{1/2} + n - \#\cJ. 
\end{equation}

Substituting~\eqref{eq:SJ} into~\eqref{eq:LsSSS}, and collecting the subsets $\cJ$ 
of the same cardinality $j = \#\cJ$, we obtain
\begin{align*}
\left|  L_q(a_1, \ldots ,a_n) - 2^{-n}q\right|  & \le  2^{-n} n + 
2^{-n}  \sum_{j=1}^n \binom{n}{j}  \( (j-1) q^{1/2} + n - j\)\\
& = \frac{1}{2}n   + 2^{-n}  q^{1/2}  \sum_{j=1}^n \binom{n}{j} (j-1) .
\end{align*}
Since 
$$
 \sum_{j=1}^n \binom{n}{j} (j-1)   =  \sum_{j=1}^n \binom{n}{j} j - 2^n +1 
 = n2^{n-1} - 2^n + 1,
 $$
 we derive 
\begin{equation} \label{eq:Lqa1}
\begin{split}
\left|  L_q(a_1, \ldots ,a_n) - 2^{-n}q\right|  
& \le \frac{1}{2} n   + 2^{-n}  q^{1/2}(n2^{n-1} - 2^n + 1)\\
& = \frac{1}{2} n   + \(\frac{n}{2} + \frac{1}{2^n} -  1\)q^{1/2} \\
& <  \frac{2}{3} n q^{1/2}, 
\end{split}
\end{equation}
where the last inequality is trivial if $q \ge 9$, 
and for $q=3, 5, 7$, one can verify it by direct computation and noticing $2 \le n \le q-1$.  
This completes the proof of the first part of Theorem~\ref{thm:Count n}.

Now, to replace the factor $2/3$ by $1/2$ in the last inequality in~\eqref{eq:Lqa1}, it is equivalent to ensure that 
$$
 \frac{1}{2} n   + \(\frac{1}{2^n} -  1\)q^{1/2} < 0,
$$
which equivalently becomes
\begin{equation} \label{eq:n24q-}
n^2 < 4q  -  \(\frac{1}{2^{n-3}} - \frac{1}{2^{2n-2}}\)q.
\end{equation}
Since $q$ is an odd prime power and $2 \le n \le q-1$, the inequality~\eqref{eq:n24q-} holds when $n=2, 3, 4$. 
So, in the following we assume $5 \le n \le q-1$ (automatically, $q \ge 7$). 

 Since $n \ge 5$, we have 
$$
4q  -  \(\frac{1}{2^{n-3}} - \frac{1}{2^{2n-2}}\)q  \ge 4q  -  \(\frac{1}{2^{2}} - \frac{1}{2^{8}}\)q = \frac{961}{256}q.
$$
So, if 
$$n^2 < \frac{961}{256}q
$$ 
(that is, $n < \frac{31}{16}q^{1/2}$), then the inequality~\eqref{eq:n24q-} automatically holds. 

If 
$$
n^2 \ge \frac{961}{256}q
$$ 
(that is, $n \ge \frac{31}{16}q^{1/2}$), then we deduce that 
$$
\frac{q}{2^{n-3}} \le \frac{q}{2^{\frac{31}{16}\sqrt{q}-3}} \le \frac{7}{2^{\frac{31}{16}\sqrt{7}-3}} < 2, 
$$
which, together with the fact that $n^2 \ne 4q -1$ (because $n^2 \not\equiv -1 \pmod{4}$),  
implies that in this case the inequality~\eqref{eq:n24q-} is equivalent to 
\begin{equation} \label{eq:n24q}
n^2 < 4q.
\end{equation} 

Therefore, when $n^2 < 4q$ (that is, $n < 2q^{1/2}$), the inequality~\eqref{eq:n24q-} always holds, 
and so  the factor $2/3$  in the last inequality in~\eqref{eq:Lqa1} can be replaced by 
$1/2$. 
This completes the proof.

\subsection{Proof of Theorem~\ref{thm:Count 3}}
\label{sec:proof count 3}

As in the proof of Theorem~\ref{thm:Count 2} and~\ref{thm:Count n}, 
we have the following analogue of~\eqref{eq:L2SSS}, where 
\begin{align*}
 L_q(a, b, c) & =   
\frac{1}{8}   \sum_{u \in \F_q \setminus\{a,b,c\} }  (1- \chi(u-a)) (1-\chi(u-b)) (1-\chi(u-c))  \\
& =  \frac{1}{8}(q-3) - \frac{1}{8} \(S_a + S_b + S_c\)  \\
& \qquad \qquad \qquad + \frac{1}{8}\(S_{a,b} + S_{a,c}+ S_{b,c}\) 
- \frac{1}{8}S_{a,b,c}, 
\end{align*}
where 
\begin{align*} 
S_a &= \sum_{u \in \F_q \setminus\{a,b,c\} }  \chi(u-a)  = \sum_{u \in \F_q \setminus\{b,c\}}  \chi(u-a) \\
&  = \sum_{u \in \F_q }  \chi(u-a) - \chi(b-a) - \chi(c-a) \\
&   = - \chi(b-a) - \chi(c-a), \\ 
  S_b &= \sum_{u \in \F_q \setminus\{a,b,c\} }  \chi(u-b) = - \chi(a-b) - \chi(c-b), \\
 S_c & = \sum_{u \in \F_q \setminus\{a,b,c\} }  \chi(u-c) = - \chi(a-c) - \chi(b-c), 
\end{align*}
and (noticing~\eqref{eq:Sab})
\begin{align*} 
  S_{a,b} &= \sum_{u \in \F_q \setminus\{a,b,c\} }  \chi((u-a)(u-b))  = \sum_{u \in \F_q \setminus\{c\}}  \chi((u-a)(u-b)) \\
& \quad = \sum_{u \in \F_q }  \chi((u-a)(u-b)) - \chi((c-a)(c-b)) \\
& \quad = - 1 - \chi((c-a)(c-b)), \\ 
  S_{a,c} & = \sum_{u \in \F_q \setminus\{a,b,c\} }  \chi((u-a)(u-c))  = - 1 - \chi((b-a)(b-c)), \\
 S_{b,c} & = \sum_{u \in \F_q \setminus\{a,b,c\} }  \chi((u-b)(u-c))  = - 1 - \chi((a-b)(a-c)), 
\end{align*}
and finally 
$$
 S_{a,b,c} =    \sum_{u \in \F_q \setminus\{a,b,c\} }  \chi\bigl((u-a) (u-b) (u-c)\bigr). 
$$
So, it remains to compute $S_{a,b,c}$. 

Note that for any $u \in \F_q \setminus\{a,b,c\}$, we have that 
$\chi((u-a) (u-b) (u-c))=1$ if and only if $(u-a) (u-b) (u-c)$ is a non-zero square in $\F_q$, 
which is equivalent to that $(u, \pm y) \in E_{a,b,c}(\F_q)$ for some non-zero $y \in \F_q$. 
Since $(a, 0), (b,0), (c,0) \in E_{a,b,c}(\F_q)$, we obtain that there are exactly 
$$
\frac{\#E_{a,b,c}(\F_q) - 3}{2}
$$
elements $u \in \F_q\setminus\{a,b,c\}$ such that $\chi((u-a) (u-b) (u-c))=1$. 
So, there are exactly 
$$
q-3 - \frac{\#E_{a,b,c}(\F_q) - 3}{2}
$$
elements $u \in \F_q\setminus\{a,b,c\}$ such that $\chi((u-a) (u-b) (u-c))=-1$. 
Hence, we have 
\begin{align*}
 S_{a,b,c} & = \frac{\#E_{a,b,c}(\F_q) - 3}{2} - \( q-3 - \frac{\#E_{a,b,c}(\F_q) - 3}{2} \) \\
 & = \#E_{a,b,c}(\F_q) - q = 1 - t_{a,b,c}. 
\end{align*}

Therefore, collecting the above computations we obtain the desired result.

\subsection{Proof of Corollary~\ref{cor:Count 3}}
By Theorem~\ref{thm:Count 3} and using the Hasse bound~\eqref{eq:Hasse}, we obtain  
$$
 \left| L_q(a,b,c) - \frac{q}{8} \right|  =  \frac{1}{8}|t_{a,b,c}|  + \frac{7}{8} + \frac{3}{4} + \frac{3}{8} \le \frac{1}{4}q^{1/2} + 2. 
$$
This completes the proof. 


\subsection{Proof of Theorem~\ref{thm:Count 3+}}
\label{sec:proof count 3+}
By Theorem~\ref{thm:Count 3},  we have 
$$
L_p(a,b,c)  -  \frac{p}{8} = \frac{1}{8}  t_{a,b,c}  + O(1).
$$
Hence 
$$
\Delta_p(a,b,c)   =    \frac{1}{8p^{1/2}} t_{a,b,c}  + O\bigl(p^{-1/2}\bigr)
=      \frac{1}{4} \cos \psi_{a,b,c}  + O\bigl(p^{-1/2}\bigr).  
$$ 
where $\psi_{a,b,c}\in [0, \pi]$  is defined by~\eqref{eq:ST angle}. 
Hence,  
$$
\Delta_p(a,b,c) \in [\sigma,\tau]
$$
is equivalent to 
$$
\psi_{a,b,c} \in [ \alpha, \beta] 
$$
for some $[\alpha, \beta] \subseteq [0, \pi]$ with 
$$
\alpha = \arccos(4\tau) + O\bigl(p^{-1/2}\bigr) \mand \beta =  \arccos(4\sigma) + O\bigl(p^{-1/2}\bigr).
$$
Recalling Lemma~\ref{lem:S-T abc}  we obtain the desired result.

\section{Numerical results}
\label{sec:numerical}  

\subsection{Outline}
Here  we present extensive numerical results in order to provide some insights on the distribution of leaves for functional graphs of families of quadratic polynomials. These results show highly regular behaviour of several parameters associated with our functional graphs. Hence, we hope that they 
may   lead to future conjectures and consequently to their resolutions.

\subsection{Set-up}
Recall that $q$ is some power of an odd prime $p$. 

It is clear that naively considering graphs built from all possible combinations of $n$ polynomials in their generic forms 
(that is, with all possible coefficients from $\F_q$)  leads to inefficiency and unacceptable computational delays.

In order to try large $n$ and large $q$, 
here we only consider the graphs  $\cG(X^2+a_1, \ldots, X^2 + a_n)$ over  $\F_q$
with pairwise distinct $a_1, \ldots, a_n \in \F_q$. 
Our purpose is to numerically study the distribution of leaves for these graphs.  
The most natural way is to make computations for all these graphs, which we call \textit{the brute force approach}. 
Clearly, the total number of these graphs is the binomial coefficient: 
\begin{equation}   \label{eq:brute}
\binom{q}{n}  \sim \frac{q^n}{n!}, 
\end{equation}
where the asymptotic formula holds when $n$ is fixed and $q$ tends to the infinity. 

In the sequel we present an approach, which we call \textit{the simplified approach}, to numerically study the distribution of leaves for such graphs. 
We use this approach in our computations. 

The following lemma enables us to design the simplified approach. 

\begin{lemma}  \label{lem:leaf}
Let $a_1, \ldots, a_n \in \F_q$ be pairwise distinct. 
Then, for any $u \in \F_q^*$ the graph $\cG(X^2+a_1 u^2, \ldots, X^2 + a_n u^2)$ over  $\F_q$ 
has the same number of leaves as the graph $\cG(X^2+a_1, \ldots, X^2 + a_n)$. 
\end{lemma}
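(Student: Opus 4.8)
The plan is to count leaves via shifted squares and then exhibit an explicit bijection between the two leaf sets given by multiplication by $u^2$.

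First I would record the characterisation of leaves already used throughout the paper: in the graph $\cG(X^2+b_1,\dots,X^2+b_n)$ a vertex $v$ has in-degree zero precisely when $v$ is not of the form $w^2+b_i$ for any $w\in\F_q$ and any $i$, that is, when $v-b_i\notin\cQ_q$ for every $i$, equivalently $\chi(v-b_i)=-1$ for every $i$. Here one uses that $0\in\cQ_q$, so $v-b_i\notin\cQ_q$ is the same as $\chi(v-b_i)=-1$. Thus the set of leaves is exactly $\F_q\setminus\bigcup_{i=1}^n(\cQ_q+b_i)$, and its cardinality is the quantity being compared.

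Next I would introduce the map $\phi\colon\F_q\to\F_q$, $\phi(v)=u^2v$, which is a bijection since $u\in\F_q^*$. The key observation is that $u^2$ is a nonzero square, so by multiplicativity of $\chi$ one has $\chi(u^2x)=\chi(u^2)\chi(x)=\chi(x)$ for all $x\in\F_q$; in particular multiplication by $u^2$ maps $\cQ_q$ onto itself and sends non-squares to non-squares. Applying this with $x=v-a_i$ and noting the identity $u^2v-a_iu^2=u^2(v-a_i)$, I obtain $v-a_i\notin\cQ_q\iff\phi(v)-a_iu^2\notin\cQ_q$, simultaneously for all $i$.

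Consequently $\phi$ restricts to a bijection from the leaf set $\F_q\setminus\bigcup_i(\cQ_q+a_i)$ of $\cG(X^2+a_1,\dots,X^2+a_n)$ onto the leaf set $\F_q\setminus\bigcup_i(\cQ_q+a_iu^2)$ of $\cG(X^2+a_1u^2,\dots,X^2+a_nu^2)$, and taking cardinalities gives the claim. There is no serious obstacle here; the only point requiring care is that one should argue at the level of in-degree-zero vertices (leaf sets) rather than attempt to realise the two graphs as isomorphic, since the naive scaling $v\mapsto u^2v$ does \emph{not} respect the edge relation of the functional graphs. The substantive input is simply the identity $\chi(u^2)=1$, which makes membership in $\cQ_q$ invariant under multiplication by $u^2$.
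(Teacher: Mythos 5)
Your proof is correct and follows essentially the same route as the paper: both use the bijection $v\mapsto u^2v$ of $\F_q$ together with the identity $u^2v-a_iu^2=u^2(v-a_i)$ (equivalently, $y=x^2+a_i$ gives $u^2y=(ux)^2+a_iu^2$) to show that this map carries non-leaves to non-leaves, hence leaves to leaves. Your added caveat that the scaling is only a bijection of leaf sets and not a graph isomorphism is a fair and correct observation, but it does not change the substance of the argument.
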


\begin{proof}
First, we define a bijection, say $\varphi$, from $\cG(X^2+a_1, \ldots, X^2 + a_n)$ to $\cG(X^2+a_1 u^2, \ldots, X^2 + a_n u^2)$ 
by setting $\varphi(x) = u^2 x$ for any $x \in \F_q$. 
Then, it suffices to show that a vertex $y$ in $\cG(X^2+a_1, \ldots, X^2 + a_n)$ is not a leaf if and only if the vertex $u^2 y$ 
in $\cG(X^2+a_1 u^2, \ldots, X^2 + a_n u^2)$ is not a leaf.   

If a vertex $y$ in $\cG(X^2+a_1, \ldots, X^2 + a_n)$ is not a leaf, then there are some $x \in \F_q$ and $a_i$ such that $y = x^2 + a_i$, 
which gives $u^2 y = (ux)^2 + a_i u^2$, and so the vertex $u^2 y$ is also not a leaf in the graph $\cG(X^2+a_1 u^2, \ldots, X^2 + a_n u^2)$. 
Similarly, we can show that if $u^2y$ is not a leaf in $\cG(X^2+a_1 u^2, \ldots, X^2 + a_n u^2)$, then $y$ is also not a leaf in $\cG(X^2+a_1, \ldots, X^2 + a_n)$. 
This  completes the proof. 
\end{proof}

Now, we fix a non-square, say $\lambda$, in $\F_q$. 
For any integer $k \ge 0$, let $N_{k}$ be the number of the graphs  $\cG(X^2 + a_1, \ldots, X^2 + a_n)$ over  $\F_q$
with  pairwise distinct $a_1, \ldots, a_n \in \F_q$ and with (exactly) $k$ leaves. 
For any integer $k \ge 0$ and $a_1 \in \{0, 1, \lambda\}$, let $N_{k}^{a_1}$ be the number of the graphs  $\cG(X^2 + a_1, \ldots, X^2 + a_n)$ over  $\F_q$
with  pairwise distinct $a_1, \ldots, a_n \in \F_q$, $a_2\cdots a_n \ne 0$   and with $k$ leaves.

\begin{lemma}  \label{lem:num}
For any integer $k \ge 0$, we have 
$$
N_k = N_{k}^{0} + \frac{ (q-1)(N_{k}^{1} + N_{k}^{\lambda})}{2n}. 
$$
\end{lemma}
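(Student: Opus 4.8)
The plan is to split the $n$-element subsets $\{a_1, \ldots, a_n\} \subseteq \F_q$ underlying the graphs counted by $N_k$ according to whether the shift $0$ is among the chosen values, and then to exploit the scaling symmetry supplied by Lemma~\ref{lem:leaf} on the part that avoids $0$. First I would note that the graph $\cG(X^2 + a_1, \ldots, X^2 + a_n)$ depends only on the set $\{a_1, \ldots, a_n\}$, so $N_k$ counts $n$-element subsets of $\F_q$ giving exactly $k$ leaves. Such a subset either contains $0$ or not. If it contains $0$, then writing its remaining elements as $a_2, \ldots, a_n$ (automatically nonzero and pairwise distinct) shows that these subsets are counted precisely by $N_k^{0}$. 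Writing $B_k$ for the number of $n$-subsets of $\F_q^*$ (those with all shifts nonzero) giving $k$ leaves, we thus have $N_k = N_k^{0} + B_k$, and it remains to prove that $B_k = (q-1)(N_k^{1} + N_k^{\lambda})/(2n)$.

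To evaluate $B_k$, I would use the action of the group $\cQ_q^*$ of nonzero squares on $\F_q^*$ by multiplication; since $\lambda$ is a fixed nonsquare, the orbits of this action are exactly $\cQ_q^*$ (with representative $1$) and its nonsquare coset (with representative $\lambda$). By Lemma~\ref{lem:leaf}, scaling a subset $S$ by any $g \in \cQ_q^*$ preserves the number of leaves. The key device is a double count of the pairs $(S,a)$ with $S$ an $n$-subset of $\F_q^*$ having $k$ leaves and $a \in S$: since each such $S$ has exactly $n$ elements, there are $n B_k$ such pairs. On the other hand I would split them according to whether $a$ is a square or a nonsquare. For $a \in \cQ_q^*$, the assignment $(S,a) \mapsto (a^{-1}S, a)$ is a bijection onto the pairs $(S',g)$ with $g \in \cQ_q^*$ and $S'$ an $n$-subset of $\F_q^*$ containing $1$ and having $k$ leaves, the inverse being $(S',g) \mapsto (gS',g)$; this is valid because $1 \in S'$ forces $g \in gS'$, and leaf counts are preserved. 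Hence there are $\tfrac{q-1}{2}\, N_k^{1}$ such pairs. The analogous normalization $g = \lambda a^{-1} \in \cQ_q^*$ for nonsquare $a$ produces $\tfrac{q-1}{2}\, N_k^{\lambda}$ pairs. Adding the two contributions gives $n B_k = \tfrac{q-1}{2}(N_k^{1} + N_k^{\lambda})$, which rearranges to the claimed value of $B_k$.

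The main obstacle, and really the only point demanding care, is verifying that these normalizations are genuine bijections with uniformly sized fibers, so that the factor $\tfrac{q-1}{2} = \#\cQ_q^*$ emerges cleanly. Concretely I would check that scaling by an element of $\cQ_q^*$ preserves nonzeroness, distinctness and (via Lemma~\ref{lem:leaf}) the leaf count; that every square $g$ genuinely occurs as the distinguished element of $gS'$; that a nonsquare $a$ has $\lambda a^{-1} \in \cQ_q^*$; and that $N_k^{1}$ and $N_k^{\lambda}$ indeed count, respectively, the $n$-subsets of $\F_q^*$ containing $1$ and those containing $\lambda$ with $k$ leaves (which follows from the definition of $N_k^{a_1}$ once one observes that the constraint $a_2 \cdots a_n \neq 0$ together with $a_1 \in \{1,\lambda\}$ forces all $n$ shifts to be nonzero). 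Combining $N_k = N_k^{0} + B_k$ with the formula for $B_k$ then yields $N_k = N_k^{0} + (q-1)(N_k^{1} + N_k^{\lambda})/(2n)$, as required.
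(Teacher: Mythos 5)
Your proof is correct and follows essentially the same route as the paper's: split off the subsets containing $0$, then relate the count over $n$-subsets of $\F_q^*$ to $N_k^{1}$ and $N_k^{\lambda}$ via the square-scaling symmetry of Lemma~\ref{lem:leaf}, with the factor $n$ arising because each such subset can be normalized at any of its $n$ elements. Your explicit double count of pairs $(S,a)$ is just a more formal packaging of the paper's observation that each graph counted by $N_k^*$ occurs exactly $n$ times among the $\tfrac{q-1}{2}(N_k^{1}+N_k^{\lambda})$ scaled copies.
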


\begin{proof}
First, clearly we have 
$$
N_k = N_{k}^{0} + N_k^*, 
$$
where $N_k^*$ is the number of the graphs  $\cG(X^2 + a_1, \ldots, X^2 + a_n)$ over  $\F_q$
with  pairwise distinct $a_1, \ldots, a_n \in \F_q^*$  and with  $k$ leaves. 

By Lemma~\ref{lem:leaf}, for each graph  $\cG(X^2 + a_1, \ldots, X^2 + a_n)$ contributing to $N_{k}^{1}$ or  $N_{k}^{\lambda}$, 
the graph $\cG(X^2 + u^2a_1, \ldots, X^2 + u^2a_n)$ contributes to $N_{k}^*$ for any $u \in \F_q^*$. 
This leads to the quantity $\frac{q-1}{2}(N_{k}^{1} + N_{k}^{\lambda})$.

In addition, each graph $\cG(X^2 + a_1, \ldots, X^2 + a_n)$ contributing to $N_{k}^*$ is counted $n$ times in $\frac{q-1}{2}(N_{k}^{1} + N_{k}^{\lambda})$. 
Indeed, each graph  $\cG(X^2 + a_1, \ldots, X^2 + a_n)$ is essentially generated by the set $\{a_1, \ldots, a_n\}$.  
For each $i$ with $1\le i \le n$,  $\{a_1, \ldots, a_n\}=a_i \cdot \{a_i^{-1} a_1, \ldots, a_i^{-1} a_n\}$ 
corresponding to a graph counted in $\frac{q-1}{2}N_{k}^{1}$ if $a_i$ is a square, 
and otherwise, $\{a_1, \ldots, a_n\}=\lambda^{-1} a_i \cdot \{\lambda a_i^{-1} a_1, \ldots, \lambda a_i^{-1} a_n\}$ 
corresponding to a graph counted in $\frac{q-1}{2}N_{k}^{\lambda}$. 

Therefore, we obtain 
$$
N_k = N_{k}^{0} + N_k^* = N_{k}^{0} + \frac{ (q-1)(N_{k}^{1} + N_{k}^{\lambda})}{2n}. 
$$
\end{proof}

By Lemma~\ref{lem:num}, we directly get the simplified approach: 
for our purpose we only need to make computations 
for the graphs $\cG(X^2 + a_1, \ldots, X^2 + a_n)$ over  $\F_q$
with $a_1 \in \{0, 1, \lambda\}$, pairwise distinct $a_1, \ldots, a_n \in \F_q$, and $a_2\cdots a_n \ne 0$. 
Clearly, the number of these graphs is equal to 
the sum of three binomial coefficients: 
\begin{equation}  \label{eq:simple}
\binom{q-1}{n-1} + \binom{q-2}{n-1} + \binom{q-3}{n-1} \sim \frac{3q^{n-1}}{(n-1)!}, 
\end{equation}
where the asymptotic formula holds when $n$ is fixed and $q$ tends to the infinity. 

Comparing~\eqref{eq:simple} with~\eqref{eq:brute}, we can see that 
the simplified approach is better than the brute force approach when $q$ is large. 

In the sequel, for simplicity we only make computations for  the graphs $\cG(X^2 + a_1, \ldots, X^2 + a_n)$ over  $\F_p$
with $a_1 \in \{0, 1, \lambda\}$, pairwise distinct $a_1, \ldots, a_n \in \F_p$, and $a_2\cdots a_n \ne 0$. 
Then, using Lemma~\ref{lem:num} we obtain the distribution of leaves for the graphs  $\cG(X^2+a_1, \ldots, X^2 + a_n)$ over  $\F_p$
with pairwise distinct $a_1, \ldots, a_n \in \F_p$.  

In addition, we always identify $\F_p$ as $\{0, 1, \ldots, p-1\}$, 
and in our computations we choose $\lambda$ to be the smallest non-square in $\F_p^*$.

\subsection{Algorithms} 
Recall that $\cQ_p$ is the set of squares in $\F_p$. 
Formally, a vertex $b$ in $\cG(X^2+a_1,  \ldots, X^2 + a_n)$ is a leaf if and only if 
there is no element $u \in \F_p$ and $a_i$ such that $u^2 + a_i = b$, that is $b - a_i \in \cQ_p$. 

We first present Algorithm~\ref{alg:leaf1} for counting leaves in the graph $\cG(X^2+a_1,  \ldots, X^2 + a_n)$ over $\F_p$. 
In Algorithm~\ref{alg:leaf1}, 
since we have identified $\F_p$ as $\{0, 1, \ldots, p-1\}$, 
the number of 0's in each binary string $B_i$ is exactly the number of leaves in the graph $\cG(X^2 + a_i)$ generated by the polynomial $X^2 + a_i$, 
 and thus, the number of 0's in the binary string $C$ 
is exactly the number of leaves in the graph $\cG(X^2+a_1,  \ldots, X^2 + a_n)$. 

\begin{algorithm}[H]
\begin{algorithmic}[1]
	\REQUIRE pairwise distinct $a_1, \ldots, a_n \in \F_p$
	\ENSURE the number of leaves in $\cG(X^2+a_1,  \ldots, X^2 + a_n)$
	\STATE  Create a zero array $A$ of length $p$.
	\STATE  Put $A[j] = j^2 \in \F_p, j=0, \ldots, p-1$.
	\FOR{$i=1, \ldots, n$}
		\STATE  Create a zero binary string $B_i$ of length $p$.
		\STATE Put the $(A[j]+a_i)$-th entry of $B_i$ to be 1, $j=0, \ldots, p-1$.
	\ENDFOR
    \STATE  Compute the bitwise OR, say $C$, of $B_1, \ldots, B_n$.
	\STATE  Output the number of 0's in $C$.  
\end{algorithmic}
\caption{Counting leaves in $\cG(X^2+a_1,  \ldots, X^2 + a_n)$}
\label{alg:leaf1}
\end{algorithm}

In Algorithm~\ref{alg:leaf1}, although it needs much memory to store those arrays and strings, 
it is very suitable for implementation and fast computation in the C++ programming.  So, we use it in our computations.

For completeness, we now present another algorithm in Algorithm~\ref{alg:leaf2} to counting leaves in the graph  
 $\cG(X^2 + a_1,  \ldots, X^2 + a_n)$ over  $\F_p$, which needs less memory than the one in Algorithm~\ref{alg:leaf1}. 
In Algorithm~\ref{alg:leaf2}, the final value of the variable $N$ is the number of leaves. 

\begin{algorithm}[H]
\begin{algorithmic}[1]
	\REQUIRE pairwise distinct $a_1, \ldots, a_n \in \F_p$
	\ENSURE the number of leaves in $\cG(X^2+a_1,  \ldots, X^2 + a_n)$
	\STATE Define an integer variable $N$, and put $N=0$. 
	\FOR{$b=0, 1, \ldots, p-1$}
	\FOR{$i=1, \ldots, n$}
		\IF {$b - a_i \in \cQ_p$}
			\STATE  Go to Step 2 with next $b$. 
		\ELSE
			\STATE  $i \leftarrow i+1$.
		\ENDIF
	\ENDFOR
	   \IF {$i=n+1$}
			\STATE  $N \leftarrow N+1$. 
		\ENDIF
	\ENDFOR
	\STATE Output $N$. 
\end{algorithmic}
\caption{Counting leaves in $\cG(X^2+a_1,  \ldots, X^2 + a_n)$}
\label{alg:leaf2}
\end{algorithm}

\subsection{Number of leaves} 

We first present the upper and lower bounds for the number of leaves of the graphs $\cG(X^2+a_1, \ldots, X^2 + a_n)$ over  $\F_p$ 
with pairwise distinct $a_1, \ldots, a_n \in \F_p$    
 for all primes $p$ with $p \leq 1000$. In Figure~\ref{fig:upperlowerbounds}, for the sake of clarity, we limit the presentation to the cases  $n=3, 4, 5$. It first confirms the result in Theorem~\ref{thm:Leaves} numerically that leafless graphs do not exist (that is, the lower bound is greater than zero) for a fixed $n$ as soon as $p$ is sufficiently large. For example, there is no leafless graphs for $n=3$ when $p \ge19$, for $n=4$ when $p \ge 107$. 

In addition, it shows some (non-strictly) monotonic behaviour where both bounds of the number of leaves increase with respect to $p$.

\begin{figure}[H]
\begin{center}
\includegraphics[scale=0.95]{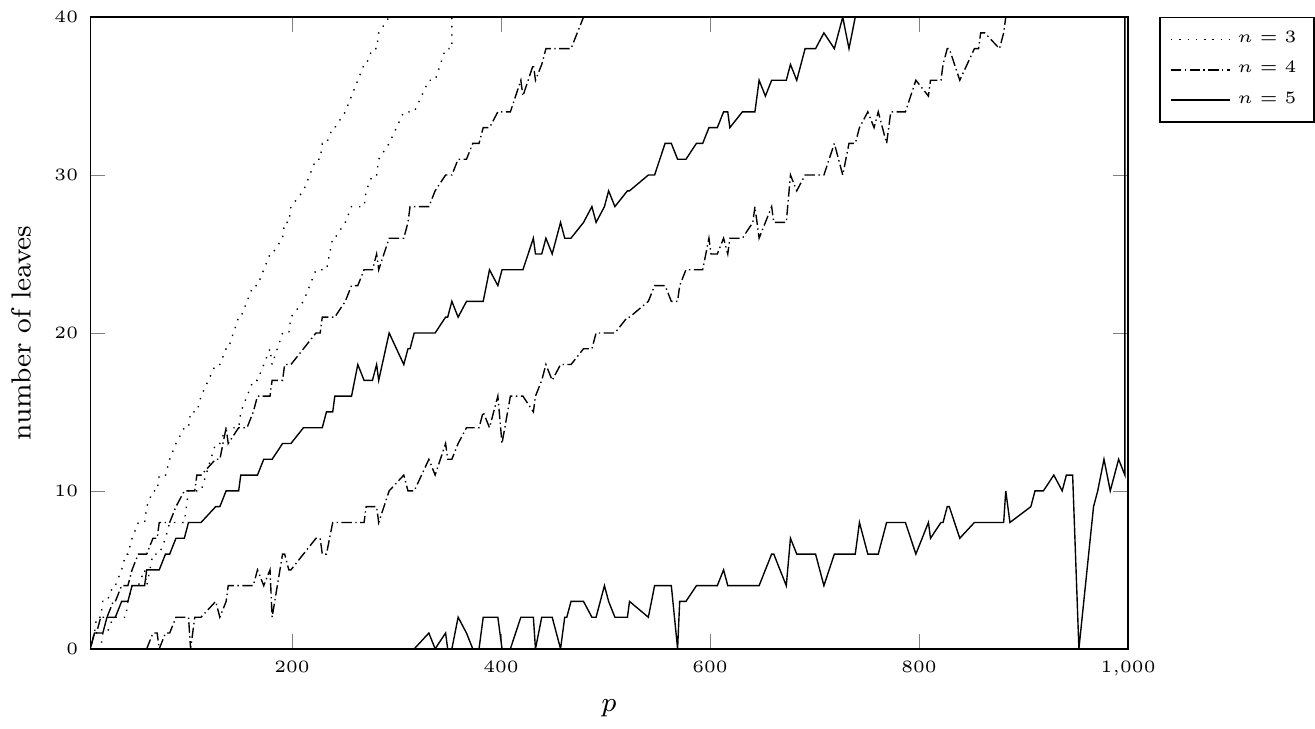}
\end{center}
\caption{Upper and lower bounds for the number of leaves when $n=3, 4, 5$.}
\label{fig:upperlowerbounds}
\end{figure}

\subsection{Number of graphs with particular number of leaves} 
Next, we consider the number of graphs with particular number of leaves for each value of $p$. We consider the cases $n=3, 4, 5, 6$ individually, in Figures~\ref{fig:leave3poly}--\ref{fig:leave6poly} respectively. 
To facilitate the presentation in the same figure, we scale the number using the natural logarithm, which is called the \textit{log-number of graphs}. 
That is, the log-number of graphs $\cG(X^2+a_1, \ldots, X^2 + a_n)$ with  $k$ leaves 
is equal to the natural logarithm of the number of graphs $\cG(X^2+a_1, \ldots, X^2 + a_n)$ with  $k$ leaves 
when $a_1, \ldots, a_n$ run over  pairwise distinct elements of $\F_p$.  

In Figures~\ref{fig:leave3poly}--\ref{fig:leave6poly}, each broken line represents the log-number of graphs with a particular number $k$ of leaves 
($1 \le k \le 39$). 
One can see that with respect to $p$, the number of such graphs first increases and then reach a peak and then decreases. 
In addition, one can also see that when $k$ increases, the first prime $p$ at which the corresponding plot of log-numbers originates also increases.

\begin{figure}[H]
\begin{center}
\includegraphics{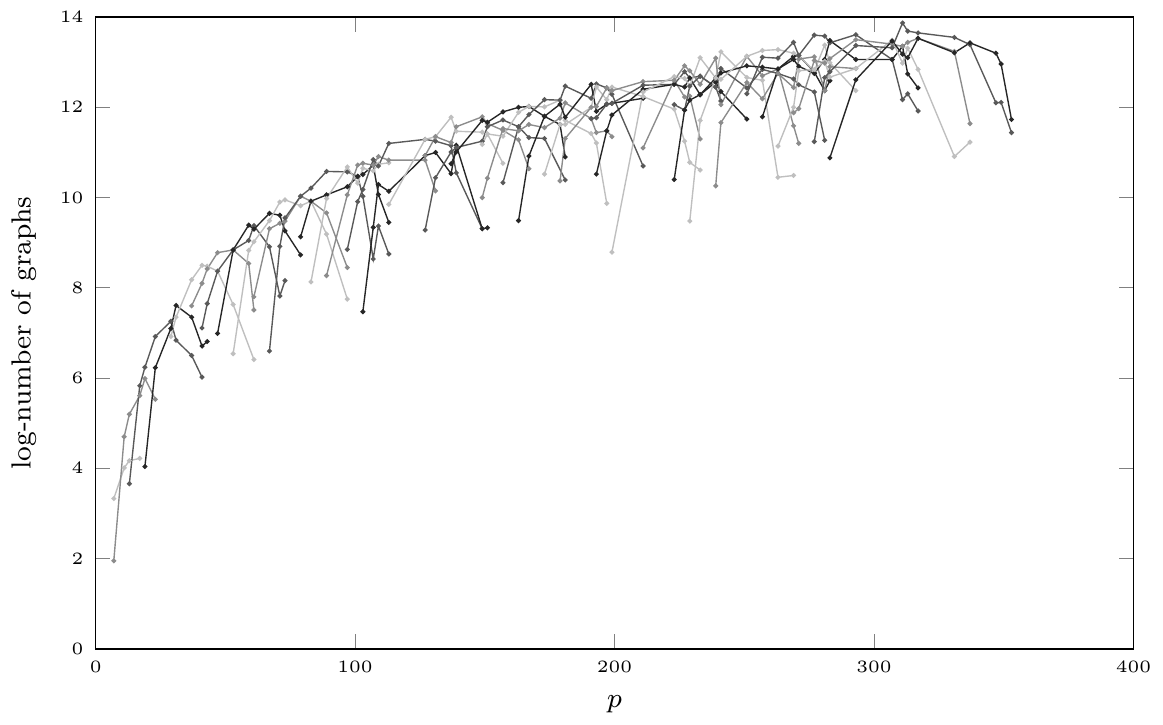}
\end{center}
\caption{Log-number of graphs with particular number of leaves when $n=3$}
\label{fig:leave3poly}
\end{figure}

\begin{figure}[H]
\begin{center}
\includegraphics{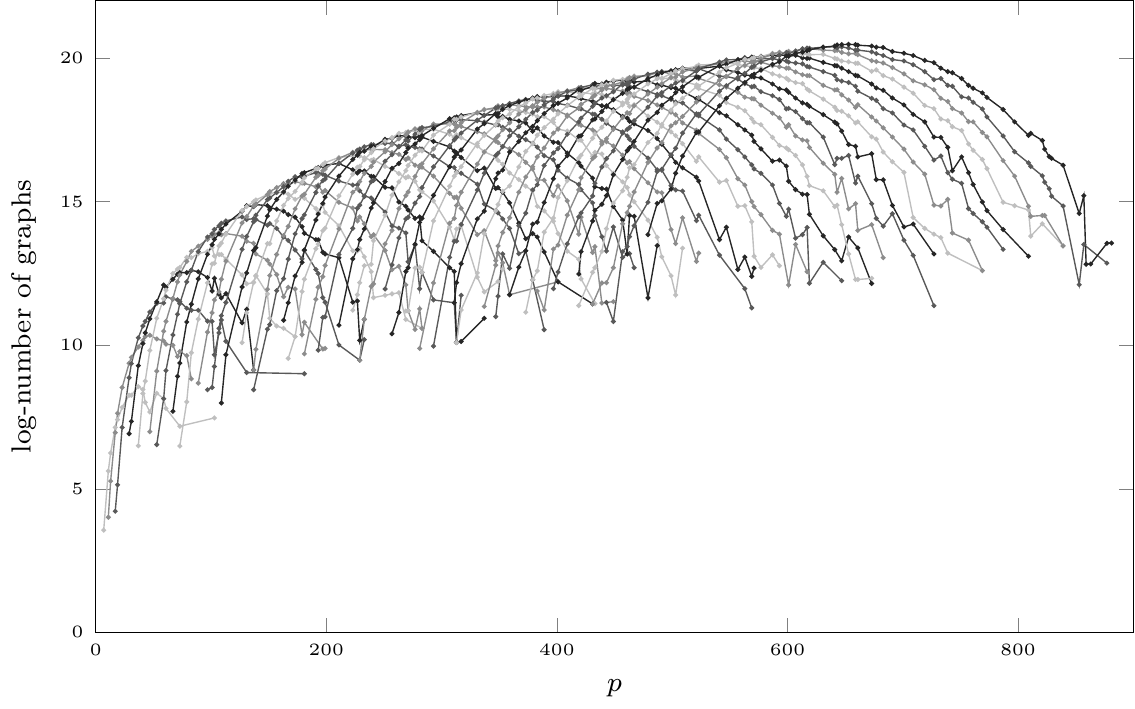}
\end{center}
\caption{Log-number of graphs with particular number of leaves when $n=4$}
\label{fig:leave4poly}
\end{figure}

\begin{figure}[H]
\begin{center}
\includegraphics{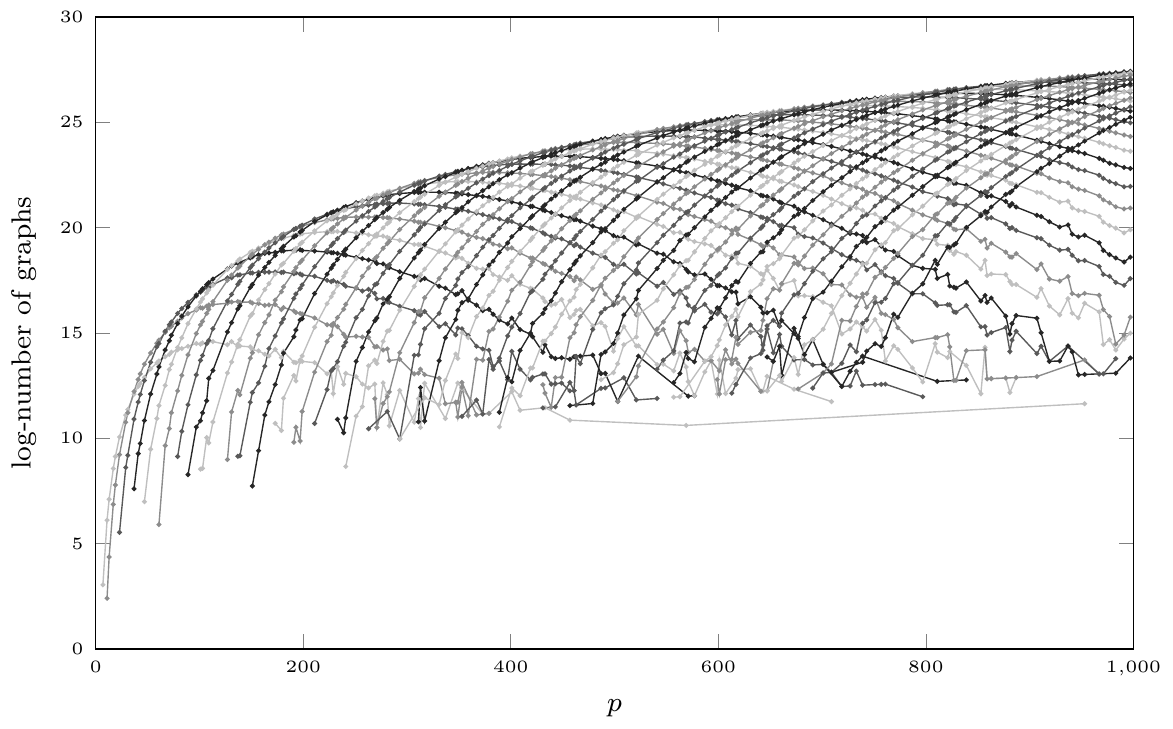}
\end{center}
\caption{Log-number of graphs with particular number of leaves when $n=5$}
\label{fig:leave5poly}
\end{figure}

\begin{figure}[H]
\begin{center}
\includegraphics{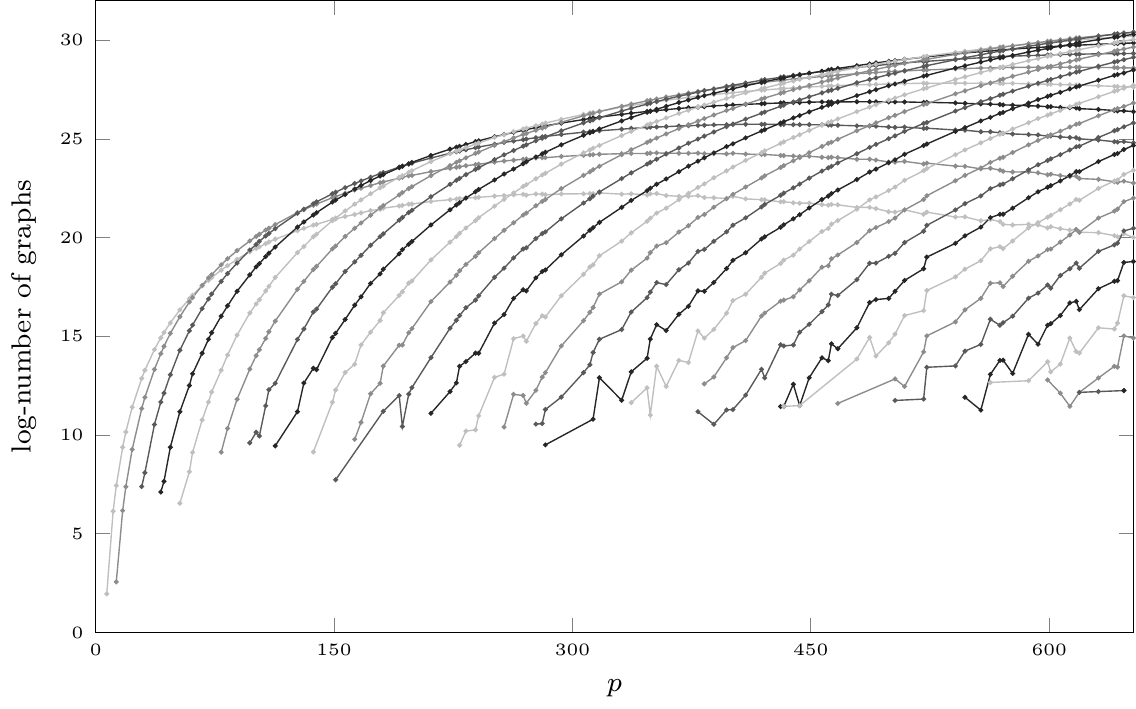}
\end{center}
\caption{Log-number of graphs with particular number of leaves when $n=6$}
\label{fig:leave6poly}
\end{figure}

\subsection{Proportion of graphs with particular number of leaves} 

Finally, we present numerical results on the proportion of graphs (as a percentage) with particular number of leaves for all the graphs 
$\cG(X^2+a_1, \ldots, X^2 + a_n)$, with pairwise distinct  $a_1, \ldots, a_n \in \F_p$, for the first 100 primes (that is, until $p=541$) up to $n=6$. 
The numerical results reflect the trend of these proportions. 
 In the counting computations, for simplicity we put the graphs with at least 40 leaves to the same set. 
 
 We present the cases $n=3, 4, 5, 6$ individually, in Figures~\ref{fig:prop3}--\ref{fig:prop6} respectively. 
In Figures~\ref{fig:prop3}--\ref{fig:prop6}, each stacked bar chart corresponding to a prime $p$ represents 
the proportions of graphs with particular numbers $k$ of leaves, 
and moreover, from the bottom to the top the number $k$ increases.  

For example, in Figure~\ref{fig:prop3}, corresponding to $n=3$, for the prime $p=71$, the graphs with 6 leaves occupy 4.35\% (the corresponding sub-bar is at the bottom), 
those with 7 leaves occupy 26.09\%, 
those with 8 leaves occupy 34.78\%, those with 9 leaves occupy 21.74\%, and those with 10 leaves occupy 13.04\% (the corresponding sub-bar is at the top).

In addition, since we only use five different shades of grey to draw the bar charts, 
it can happen that in one stacked bar chart some sub-bars have the same appearance, 
but they definitely correspond to different values of $k$. 

In Figure~\ref{fig:prop3},  
for primes $p \ge 359$, all the graphs have at least 40 leaves 
(note that we put the graphs with at least 40 leaves to the same set). 
In addition, for primes $71 \le p < 359$, the minimal number of leaves of the graphs we consider for each prime $p$ 
is presented in the column `leaves' in Table~\ref{tab:n=3}, and it also corresponds to the bottom sub-bar of the stacked bar chart of $p$ in Figure~\ref{fig:prop3}. 
From Table~\ref{tab:n=3} one can see that this minimal number roughly increases when $p$ grows. 
This also has been suggested by Figure~\ref{fig:upperlowerbounds}. 

In Figures~\ref{fig:prop4}--\ref{fig:prop6}, the nearby sub-bars having the same
shade of grey correspond to the same $k$ (number of leaves), 
and they form a strip in the figures. 
In addition, in each figure different strips correspond to different $k$. 
It is clear from Figures~\ref{fig:prop4}--\ref{fig:prop6} that the proportion of graphs with a fixed number $k$ of leaves 
first increases and then decreases. 
 This  shows again a similar distribution of graphs with  $k$ leaves for any $k$ as in Figures~\ref{fig:leave3poly}--\ref{fig:leave6poly}.  
This also suggests that an approximation of the number of   graphs with  $k$ leaves for  given $p$ and $n$ might be predictable.

\begin{table}  
\begin{center}
\begin{tabular}{|c|c|c|c|c|c|c|c|c|c|c|}  
\hline
$p$ & leaves & & $p$ & leaves & & $p$ & leaves & & $p$ & leaves \\
\cline{1-2}  \cline{4-5}  \cline{7-8} \cline{10-11}
71 & 6 & & 137 & 14  & & 199 & 21 & & 277 & 30 \\
\cline{1-2}  \cline{4-5}  \cline{7-8} \cline{10-11}
73  & 6  & & 139  & 14   & & 211   & 22  & & 281 & 30 \\
\cline{1-2}  \cline{4-5}  \cline{7-8} \cline{10-11}
79  & 7  & & 149  &  14  & & 223   & 24  & & 283  & 31 \\
\cline{1-2}  \cline{4-5}  \cline{7-8} \cline{10-11}
83  & 8  & & 151  &  15  & &  227  & 24  & &  293 & 32  \\
\cline{1-2}  \cline{4-5}  \cline{7-8} \cline{10-11}
89  & 8  & & 157  & 16   & &  229  & 24  & &  307 & 34 \\
\cline{1-2}  \cline{4-5}  \cline{7-8} \cline{10-11}
97  & 8  & & 163  & 17   & &  233  & 24  & & 311  & 34 \\
\cline{1-2}  \cline{4-5}  \cline{7-8} \cline{10-11}
101  & 10  & & 167  & 17  & & 239   &  26 & & 313  & 34 \\
 \cline{1-2}  \cline{4-5}  \cline{7-8} \cline{10-11}
103  & 10  & & 173  & 18   & &  241  & 26  & & 317  & 34 \\
\cline{1-2}  \cline{4-5}  \cline{7-8} \cline{10-11}
107  & 10  & &  179 &  19  & &  251  & 27  & & 331  &  36 \\
\cline{1-2}  \cline{4-5}  \cline{7-8} \cline{10-11}
109  & 10  & & 181  & 18   & & 257   & 28  & & 337  & 36 \\
 \cline{1-2}  \cline{4-5}  \cline{7-8} \cline{10-11}
113  & 10  & & 191  &  20  & &  263  & 28  & & 347  & 38  \\
\cline{1-2}  \cline{4-5}  \cline{7-8} \cline{10-11}
127  & 13  & & 193  & 20   & & 269   & 28  & & 349  & 38  \\
\cline{1-2}  \cline{4-5}  \cline{7-8} \cline{10-11}
131  & 13  & & 197  &  20  & &  271  & 29  & &  353 & 38 \\
\hline
\end{tabular}
\vspace{5mm}
\caption{Minimal number of leaves when $n=3$}
\label{tab:n=3}
\end{center}
\end{table}

\begin{figure}[H]
\begin{center}
\includegraphics{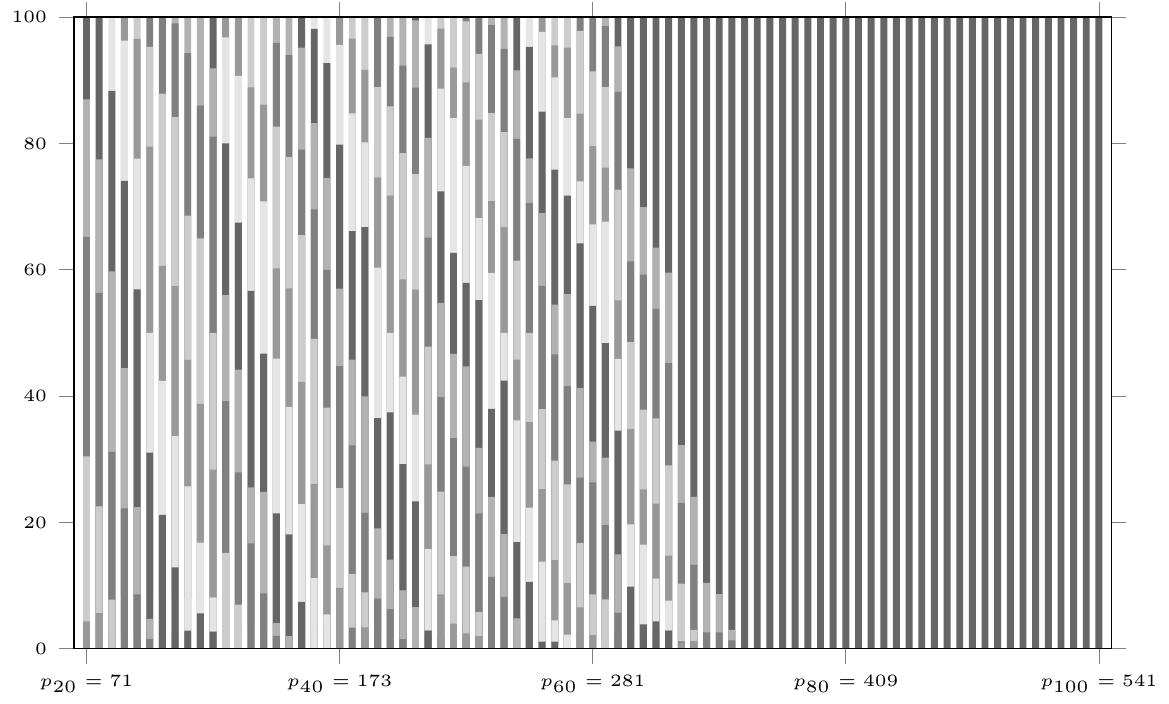}
\end{center}
\caption{Proportion of number of leaves when $n=3$}
\label{fig:prop3}
\end{figure}

\begin{figure}[H]
\begin{center}
\includegraphics{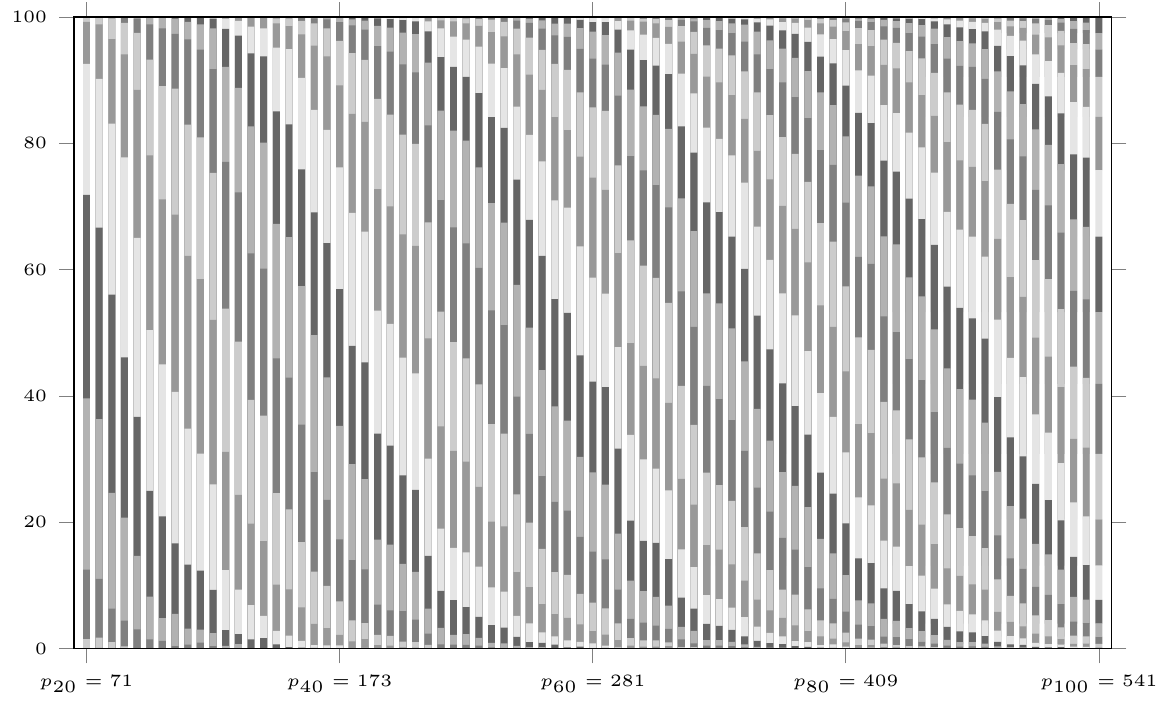}
\end{center}
\caption{Proportion of number of leaves when $n=4$}
\label{fig:prop4}
\end{figure}

\begin{figure}[H]
\begin{center}
\includegraphics[scale=1.0]{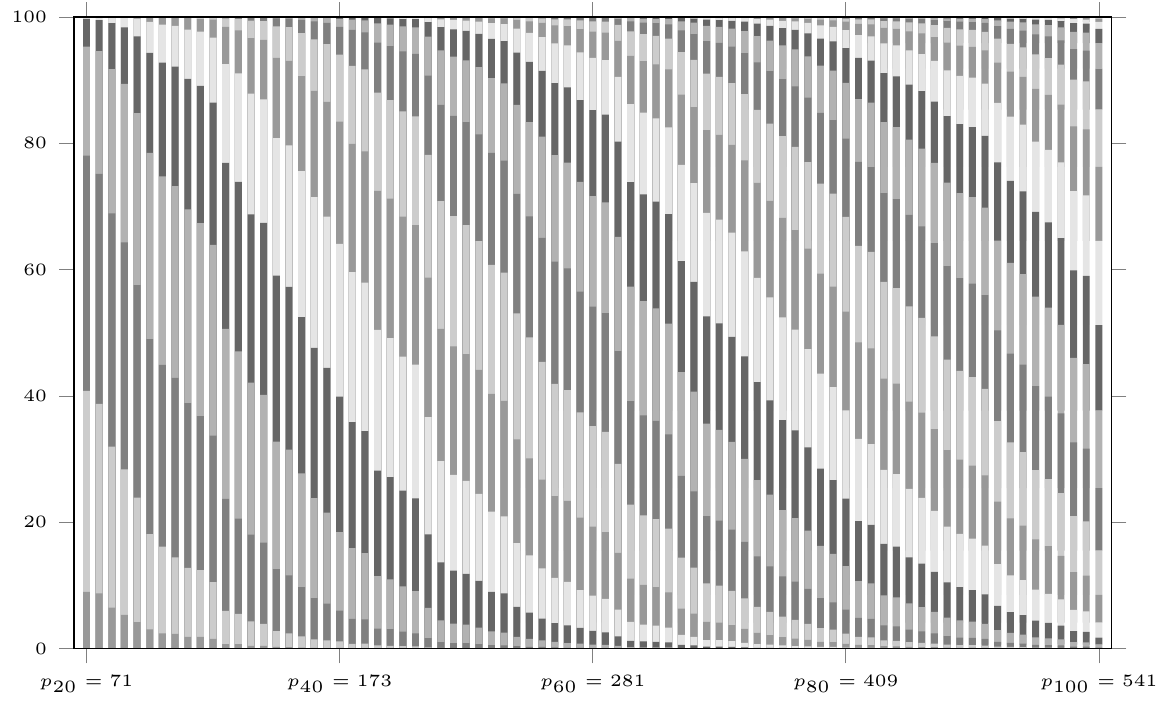}
\end{center}
\caption{Proportion of number of leaves when $n=5$}
\label{fig:prop5}
\end{figure}

\begin{figure}[H]
\begin{center}
\includegraphics{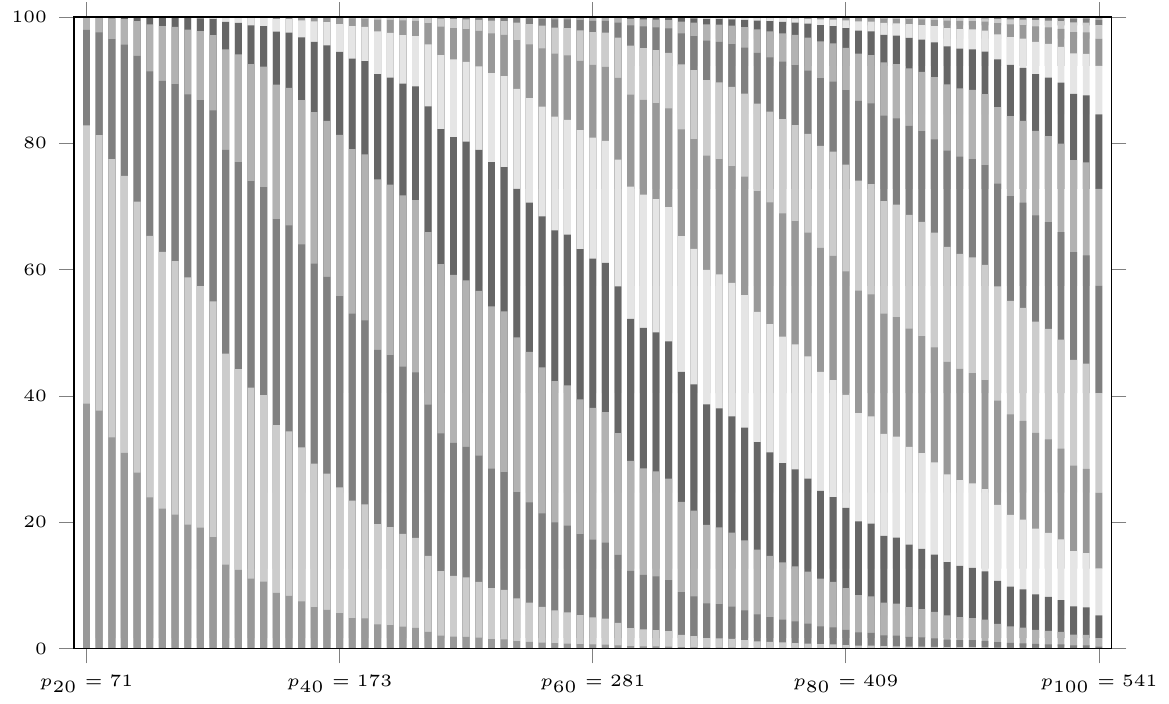}
\end{center}
\caption{Proportion of number of leaves when $n=6$}
\label{fig:prop6}
\end{figure}

\section{Comments}
The bounds of  Theorems~\ref{thm:Leaves} and~\ref{thm:NoLeaves} are of the same order 
of magnitude, and they essentially differ by a factor of $4$.
A rather naive heuristic model suggests that Theorems~\ref{thm:NoLeaves} is perhaps more precise. 
 It is certainly interesting to  investigate this in more details. 
 
 In Section~\ref{sec:numerical}, since the number of graphs we have to examine rapidly grows with the number of polynomials $n$ and the field size $q$, we also ask about further improvements of our  algorithms.

\section*{Acknowledgement}  
The authors would like to thank the referees for their careful reading and  valuable comments. 

For the research, B.~Mans and D.~Sutantyo were partly supported by the Australian Research Council (Discovery Project DP170102794), 
M.~Sha by the Guangdong Basic and Applied Basic Research Foundation (No. 2022A1515012032), 
and I.~E.~Shparlinski by the Australian Research Council (Discovery Projects DP180100201 and DP200100355).

\end{document}